\let\@fnsymbol\@arabic
\newcommand{\correz}[1]{\textcolor{black}{#1}}
\newtheorem{theorem}{Theorem}
\newtheorem{lemma}{Lemma}
\newtheorem{proposition}{Proposition}
\newtheorem{remark}{Remark}
\newtheorem{definition}{Definition}
\title{A geometric analysis of the Bazykin-Berezovskaya predator-prey model with Allee effect in an economic framework}
\author{Jacopo Borsotti\thanks{Department of Mathematical, Physical and Computer Sciences, University of Parma, Parco Area delle Scienze 53/A, 43124 Parma, Italy, {\tt jacopo.borsotti@unipr.it}}\, and Mattia Sensi\thanks{Dipartimento di Matematica, Università degli Studi di Trento, Via Sommarive 14, 38123 Povo (Trento), Italy, {\tt mattia.sensi@unitn.it}}}
\date{}
\begin{document}

\maketitle

\begin{abstract}
We study a fast-slow version of the Bazykin-Berezovskaya predator-prey model with Allee effect evolving on two timescales, through the lenses of Geometric Singular Perturbation Theory (GSPT). The system we consider is in non-standard form. We completely characterize its dynamics, providing explicit threshold quantities to distinguish between a rich variety of possible asymptotic behaviors. Moreover, we propose numerical results to illustrate our findings. Lastly, we comment on the real-world interpretation of these results, in an economic framework and in the context of predator-prey models.
\end{abstract}

\section{Introduction} \label{section1}

Systems of Ordinary Differential Equations (ODEs) have historically provided a convenient and often analytically tractable approach to the mathematical modeling of natural phenomena. In particular, relevant to this work, many different variations of planar systems have been proposed over the years to describe the interaction of predator and prey populations \cite{4,dawes2013derivation}. 

Often, natural phenomena are a result of interacting mechanisms which evolve on widely different timescales: we refer in particular to \cite{40} for a wide array of examples, ranging from engineering to pattern formation, from lasers to celestial mechanics. A relatively recent technique to analyze these systems is the so-called Geometric Singular Perturbation Theory (GSPT), stemming from the seminal works of Neil Fenichel \cite{23}, and complemented with more advanced tools such as the \emph{entry-exit function} \cite{34,35}.

This approach has been applied to a variety of modeling scenarios: epidemics \cite{jardon2021geometric,jardon2021geometric2,bulai2024geometric,kaklamanos2024geometric,SIRS}, neuroscience \cite{rodrigues2016time,kaklamanos2023geometric,sensi2023slow}, ecology \cite{iuorio2021influence,grifo2025far}, chemistry \cite{gucwa2009geometric,kuehn2015multiscale,taghvafard2021geometric}, \dots \, In particular, one common underlying fact for many of these models, if we abstract from the case-specific biological details, is that it takes much longer for a resource to develop than it takes to consume it. This is the case for ``replenishing'' the susceptible individuals pool in an epidemic model with prolonged or permanent immunity upon recovery, or for prey reproduction in a predator-prey setting, or for resource growth in a resource-production scenario.

This work focuses on a planar model originally introduced in \cite{bazykin1979allee} (see also \cite[Section 3.5.5]{4}) to describe the interaction between a naturally growing resource (governed by an Allee effect \cite{allee}) and an agent using said resource for production (e.g., trees and woodcutting). Naturally, the growth of the resource is much slower than its harvesting and consumption. For example, this is the case of deforestation \cite{forest} or oil extraction \cite{SORRELL20105290}. This motivates us to introduce a distinction in the order of magnitude of the parameters involved in our system. In particular, the system under study is a fast-slow system in \emph{non-standard form} \cite{wechselberger2020geometric}, meaning that neither of the variables of the system is \emph{globally} fast or slow, but the separation of timescales is evident close to a manifold representing the absence of production. We exploit this separation of timescales, encapsulated in a small parameter $0<\varepsilon \ll 1$, to fully characterize the possible asymptotic behaviors of the system, depending on explicit relations between the $\mathcal{O}(1)$ parameters of the model. The analytical results allow us to distinguish between several economic scenarios, each describing a different evolution of resources and production, and to identify those in which long-term production is maximized while ensuring that resources are not depleted. 

We remark that our model can also be interpreted as a predator-prey system in which the birth rate of the prey population is significantly lower than the other rates governing species interactions and predator mortality. Such timescale separation is a well-documented phenomenon in predator-prey dynamics. Notable examples include interactions between ungulates and their predators \cite{ungulati,peterson2003temporal}, algae and rotifers \cite{alghe}, and deer and lynx populations \cite{andren2024numerical}. \correz{We refer the interested reader to three recent papers which study existence, number, and stability of limit cycles (specifically, relaxation oscillations) in planar slow-fast systems of ODEs describing predator-prey interactions \cite{Huzak2018,hsu2019number,ai2024relaxation}.}

The remainder of this manuscript is structured as follows. In Section \ref{section2}, we introduce the model under study, explaining its economical interpretation, listing our assumptions, and recalling known results on its equilibria and their stability. In Section \ref{section3}, we provide our multiple timescale analysis of the model, after a brief recollection of the basis of GSPT and of the entry-exit function. In Section \ref{section4}, we illustrate our analytical results through a selection of numerical simulations and bifurcation analysis. In Section \ref{section5}, we provide economical and biological interpretations of the most interesting possible asymptotic behaviors of the model. We conclude in Section \ref{section6}.

\section{The model} \label{section2}

In this section, we introduce an ODE model which describes the evolution of the quantity of resources available $R$ and the production $P$ of a society. Our goal is to develop a model capable of reproducing an economy where the natural growth rate of resources is significantly lower than the other parameters governing the system. This implies that resources are consumed at a much higher rate than they are recreated. Hence, we want to understand under what conditions all resources fade away and when, on the other hand, the system reaches stability (which could be either a fixed point or a limit cycle).

Consider the Bazykin-Berezovskaya model \cite{4,bazykin1979allee}, which is often used to simulate predator-prey dynamics, 
\begin{align} \label{eq1}
\left\{
\begin{aligned}
    \frac{dR(t)}{dt} &= \varepsilon n R(t) (R(t)-L) (M - R(t)) - e R(t) P(t), \\ 
    \frac{dP(t)}{dt} &= b R(t) P(t) - d P(t). \\ 
\end{aligned}
\right.
\end{align}
The parameters of the system are the following: 
\begin{itemize}
    \item $\varepsilon n>0$ represents the natural rate of growth of the resources; 
    \item $M>0$ represents the maximum amount of resources that the environment is able to produce; 
    \item $0<L<M$ is the resource extinction threshold; 
    \item $e>0$ represents the resource extraction rate; 
    \item $b>0$ represents the resource utilization efficiency; 
    \item $d>0$ is the production decline rate; 
    \item $0<\varepsilon \ll 1$ is a small parameter highlighting the fact that the natural growth rate of resources is significantly lower than the other parameters governing the system, in particular $\varepsilon \ll n,M,l,e,p,d$. 
\end{itemize}
The growth of the resources in absence of production is modeled with a logistic growth adapted with an Allee effect. The consume of the resources and the consequent increase in production are modeled as a Lotka-Volterra functional. The production is assumed to decrease exponentially when no resources are available. Note that if $bM<d$, then the production $P$ can only decrease. Model \eqref{eq1} can also be interpreted as a predator-prey scenario where the rate of growth of the preys is much smaller than the other rates governing the system.  

Applying the change of variables 
$$R=Mu, \quad P=\dfrac{nM^2}{e}v, \quad t=\dfrac{\tilde{t}}{nM^2},$$ and dropping the dependence on the time variable for ease of notation, system \eqref{eq1} can be rewritten as 
\begin{align} \label{eq2}
\left\{
\begin{aligned}
    \dot{u} &= \varepsilon u (u-l) (1-u)  - uv, \\ 
    \dot{v} &= -\gamma v(m-u), \\ 
\end{aligned}
\right.
\end{align}
where 
$$l=L/M\in (0,1), \quad \gamma=b/(nM)>0, \quad m=d/(bM)>0.$$ 
System \eqref{eq2} describes the evolution of the new variables $u$ and $v$, the overdot $^\cdot$ indicates the derivative with respect to the (new) time variable (which we will keep denoting as $t$). Note that $bM<d$ is equivalent to $m>1$. System \eqref{eq2} evolves in the economically relevant region 
\begin{equation} \label{eq3}
    \Delta \coloneqq \{(u,v) \in \mathbb{R}^2 \colon \; u \in [0,1], \; v \ge 0\}, 
\end{equation}
indeed $\dot{v}|_{v=0}=0$, $\dot{u}|_{u=0}=0$, and $\dot{u}|_{u=1}=-v\le 0$. Moreover, it evolves on two different timescales: the fast timescale $t$, and the slow timescale $\tau=\varepsilon t$ (this two-timescale structure will be made explicit in Section \ref{section3}). 

\subsection{Equilibria, stability, and bifurcation analysis}  \label{section2_2}

The equilibria and the bifurcations of system \eqref{eq2} are known \cite{4,3}. Here we provide a summary of the results which will be useful for our analysis. The system always has three production-free equilibria: 
\begin{equation}
    \mathbf{x_1} = (0, 0), \quad \mathbf{x_2}=(l,0), \quad \mathbf{x_3}=(1,0), 
\end{equation}
moreover, if $m \in (l,1)$ then there exists also the equilibrium  
\begin{equation}
    \mathbf{x_4}=(m, \varepsilon (m-l)(1-m)) \eqqcolon (\Bar{u}, \Bar{v}). 
\end{equation}
Notice that $\Bar{v} \in \mathcal{O}(\varepsilon)$. It is important to remark that the properties of the equilibria are only determined by the relationship between the parameters $l$ and $m$:  
\begin{itemize} 
    \item $\mathbf{x_1}$ is always locally asymptotically stable. In particular, its basin of attraction contains the set $\{u<l\}$; indeed, $u<l$ implies $\dot{u} \le 0$, $\dot{u}=0$ when $u=0$, and $v$ decreases exponentially towards 0 on the set $\{u=0\}$. 
    \item $\mathbf{x_2}$ is always unstable. However, if $m>l$, then it possesses a 1D stable manifold, locally described by the eigenspace $E(-\gamma (m-l))=\textnormal{span}_\mathbb{R}\{(l, \varepsilon l (1-l) + \gamma (m-l))\}$. 
    \item $\mathbf{x_3}$ is locally asymptotically stable if $m>1$, otherwise it is unstable. However, it always has a stable manifold: the subset of the $u$-axis $\{l < u \le 1, \; v=0\}$. 
    \item $\mathbf{x_4}$ is unstable if $m \in (l, (l+1)/2)$, while it locally asymptotically stable if $m \in ((l+1)/2, 1)$. In particular, since the eigenvalues of the Jacobian matrix of \eqref{eq2} computed at $\mathbf{x_4}$ are 
    \begin{equation} \label{eqlambda}
        \lambda_{1,2} = \frac{1}{2} \left(\varepsilon m (1+l-2m) \pm \sqrt{\varepsilon^2 m^2 (1+l-2m)^2 - 4 \varepsilon\gamma m (m-l)(1-m)}\right), 
    \end{equation}
    this equilibrium is a focus; note indeed that $0<\varepsilon \ll 1$ implies that the argument of the square root is negative. We will provide more details about this equilibrium in Section \ref{sec:interm_scale}. 
\end{itemize}
For the bifurcation analysis we focus on the role of $m$. For $m=l$, where $\Bar{v}=0$, there is a transcritical bifurcation, with $\mathbf{x_4}$ coinciding with $\mathbf{x_2}$. As $m$ increases and reaches a value $\Tilde{m}(l, \gamma,\varepsilon) \in (l, (l+1)/2)$ a stable heteroclinic cycle between $\mathbf{x_2}$ and $\mathbf{x_3}$ is formed; one orbit of such cycle coincides with the part of the $u$-axis $u\in [l,1]$. We will show in Section \ref{section3_4} that 
\begin{equation*}
    \tilde{m}(l,\gamma,\varepsilon) \xrightarrow{\varepsilon \to 0^+} \frac{l-1}{\log l},
\end{equation*}
where $\log$ indicates the natural logarithm. Such cycle shrinks around the unstable equilibrium $\mathbf{x_4}$ as $m$ increases and collapses on it for $m=(l+1)/2$, changing the stability of $\mathbf{x_4}$ through a Hopf bifurcation. The equilibrium $\mathbf{x_4}$ thus becomes locally stable and the cycle ceases to exist. Finally, for $m=1$, where again $\Bar{v}=0$, the system undergoes another transcritical bifurcation, with $\mathbf{x_4}$ collapsing on $\mathbf{x_3}$. 

For future use, let us introduce the curve 
\begin{equation} \label{future}
    v = \alpha(u) \coloneqq  \gamma \left(l - u + m \log\left(\frac{u}{l}\right)\right),
\end{equation}
\correz{which, if $m>l$, as we will demonstrate in Section \ref{section3}, describes the orbit converging to the equilibrium $\mathbf{x_2}$ in the reduced system obtained by setting $\varepsilon=0$.} Figure \ref{fig:sketch} shows the attractors of the orbits of system \eqref{eq2} as $m$ varies. It also contains several information that will be derived in Section \ref{section3}.

\begin{figure}[h!]
\centering
\begin{subfigure}{.3\textwidth}
  \centering
  \begin{tikzpicture}
 \node at (0,0) {\includegraphics[width=\linewidth]{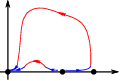}};
 \node at (-2.4,-1) {$\mathbf{x_1}$};
\node at (0.1,-1.6) {$\mathbf{x_2}$};
\node at (1.5,-1.6) {$\mathbf{x_3}$};
\node at (2.2,-1.6) {$u$};
\node at (-2.4,1.5) {$v$};
\node at (0,2) {Case 1: $m<l$};
  \end{tikzpicture}
  \caption{}
  \label{fig:sketch1}
\end{subfigure}\hspace{.5cm}
\begin{subfigure}{.3\textwidth}
  \centering
 \begin{tikzpicture}
 \node at (0,0) {\includegraphics[width=\linewidth]{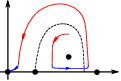}};
  \node at (-2.4,-1) {$\mathbf{x_1}$};
\node at (1.7,-1.6) {$\mathbf{x_3}$};
\node at (-1,-1.6) {$\mathbf{x_2}$};
\node at (0.5,-0.4) {$\mathbf{x_4}$};
\node at (2.2,-1.6) {$u$};
\node at (-2.4,1.5) {$v$};
\node at (0,0.9) {$v=\alpha(u)$};
\node at (0,2) {Case 2: $l<m<\tilde{m}(l,\gamma,\varepsilon)$};
  \end{tikzpicture}
  \caption{}
  \label{fig:sketch2}
\end{subfigure}\hspace{.5cm}
\begin{subfigure}{.3\textwidth}
  \centering
  \begin{tikzpicture}
 \node at (0,0) {\includegraphics[width=\linewidth]{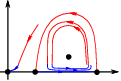}};
  \node at (-2.4,-1) {$\mathbf{x_1}$};
\node at (1.7,-1.6) {$\mathbf{x_3}$};
\node at (-1,-1.6) {$\mathbf{x_2}$};
\node at (0.5,-0.4) {$\mathbf{x_4}$};
\node at (2.2,-1.6) {$u$};
\node at (-2.4,1.5) {$v$};
\node at (0.75,1.4) {$v=\alpha(u)$};
\node at (0,2) {Case 3: $m=\tilde{m}(l,\gamma,\varepsilon)$};
  \end{tikzpicture}
  \caption{}
  \label{fig:sketch4}
\end{subfigure}\\
\begin{subfigure}{.3\textwidth}
  \centering
 \begin{tikzpicture}
 \node at (0,0) {\includegraphics[width=\linewidth]{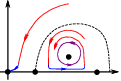}};
  \node at (-2.4,-1) {$\mathbf{x_1}$};
\node at (1.7,-1.6) {$\mathbf{x_3}$};
\node at (-1,-1.6) {$\mathbf{x_2}$};
\node at (0.5,-0.4) {$\mathbf{x_4}$};
\node at (2.2,-1.6) {$u$};
\node at (-2.4,1.5) {$v$};
\node at (0.75,0.9) {$v=\alpha(u)$};
\node at (0,2) {Case 4: $\tilde{m}(l,\gamma,\varepsilon)<m<\frac{l+1}{2}$};
  \end{tikzpicture}
  \caption{}
  \label{fig:sketch3}
\end{subfigure}\hspace{.5cm}
\begin{subfigure}{.3\textwidth}
  \centering
 \begin{tikzpicture}
 \node at (0,0) {\includegraphics[width=\linewidth]{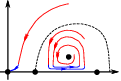}};
  \node at (-2.4,-1) {$\mathbf{x_1}$};
\node at (1.7,-1.6) {$\mathbf{x_3}$};
\node at (-1,-1.6) {$\mathbf{x_2}$};
\node at (0.4,-0.4) {$\mathbf{x_4}$};
\node at (2.2,-1.6) {$u$};
\node at (-2.4,1.5) {$v$};
\node at (0.75,0.9) {$v=\alpha(u)$};
\node at (0,2) {Case 5: $\frac{l+1}{2}\leq m<1$};
  \end{tikzpicture}
  \caption{}
  \label{fig:sketch5}
\end{subfigure}\hspace{.5cm}
\begin{subfigure}{.3\textwidth}
  \centering    \begin{tikzpicture}
 \node at (0,0) {\includegraphics[width=\linewidth]{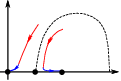}};
 \node at (-2.4,-1) {$\mathbf{x_1}$};
\node at (0.1,-1.6) {$\mathbf{x_3}$};
\node at (-1,-1.6) {$\mathbf{x_2}$};
\node at (2.2,-1.6) {$u$};
\node at (-2.4,1.5) {$v$};
\node at (1,1.4) {$v=\alpha(u)$};
\node at (0,2) {Case 6: $m>1$};
  \end{tikzpicture}
  \caption{}
  \label{fig:sketch6}
\end{subfigure}
\caption{Visualization of the dynamics as $m$ varies. Slow parts of the orbits are depicted in blue and with a single arrow along them, fast parts in red and with a double arrow. (a) $m<l$, the dynamic is attracted to $\mathbf{x_1}$, either immediately or after a slow permanence close to the $u$-axis, followed by a fast excursion away from it; (b) $l<m<\tilde{m}(l,\gamma,\varepsilon)$, same as (a), but the unstable equilibrium $\mathbf{x_4}$ appears in the economically relevant region \eqref{eq3}; (c) $m=\tilde{m}(l,\gamma.\varepsilon)$, the system exhibits a heteroclinic cycle connecting $\mathbf{x_2}$ and $\mathbf{x_3}$ in the slow flow and $\mathbf{x_3}$ and $\mathbf{x_2}$ in the fast flow (this latter heteroclinic orbit is approximated by the curve $v=\alpha(u)$, recall \eqref{future}); orbits starting inside this cycle are attracted to it, orbits starting outside are attracted to $\mathbf{x_1}$; (d) $\tilde{m}(l,\gamma,\varepsilon)<m<(l+1)/2$, orbits starting below the curve $v=\alpha(u)$ are attracted to the unique stable limit cycle (purple) around the unstable equilibrium $\mathbf{x_1}$, orbits starting above such curve are attracted to $\mathbf{x_1}$ (actually, we will show that there exists a value $\Bar{m}(l,\gamma,\varepsilon)$, such that $\Bar{m}(l,\gamma,\varepsilon) \to (l+1)/2$ as $\varepsilon \to 0$, which distinguishes between when the stable cycle enters the slow flow and when it does not, hence this case corresponds more precisely to $\tilde{m}<m<\Bar{m} \approx (l+1)/2$); (e) $(l+1)/2\leq m<1$, orbits starting below the curve $v=\alpha(u)$ are attracted to $\mathbf{x_4}$ (on which the limit cycle collapsed as $m\to {\frac{l+1}{2}}^-$), orbits starting above such curve are attracted to $\mathbf{x_1}$; (f) $m>1$, the dynamic is attracted either to $\mathbf{x_1}$ or to $\mathbf{x_3}$ (the curve $v=\alpha(u)$ again approximates the border between the two basins of attraction). \label{fig:sketch}}\end{figure}

\section{Multiple timescale analysis} \label{section3}

In this section, we analyze the two-timescale structure of system \eqref{eq2}. In order to highlight the two-timescale nature of the system, we write it as 
\begin{equation} \label{eq4}
    \dot{\mathbf{x}}=\mathbf{F_1}(\mathbf{x})+\varepsilon \; \mathbf{F_2}(\mathbf{x}), 
\end{equation}
where 
\begin{equation} \label{eq5}
    \mathbf{x}=
    \begin{bmatrix}
        u \\ v
    \end{bmatrix}, \quad
    \mathbf{F_1}(\mathbf{x})=
    \begin{bmatrix}
        -u v \\ 
        -\gamma v (m-u)
    \end{bmatrix}, \quad
    \mathbf{F_2}(\mathbf{x})=
    \begin{bmatrix}
        u (u-l) (1-u) \\ 0 
    \end{bmatrix}. 
\end{equation}

\subsection{Preliminaries on Geometric Singular Perturbation Theory} \label{section3_1} 

We provide a very brief introduction to GSPT (see e.g. \cite{40,23,wechselberger2020geometric,2} for a more in-depth presentation), and in particular of the entry-exit function \cite{35}. Both will be fundamental to study the combination of the two different timescales of system \eqref{eq4}.

Consider the so-called \emph{fast-slow system} in \emph{standard form}
\begin{align} \label{eq6}
\left\{
\begin{aligned}
    \varepsilon \; \mathbf{x}' &= \mathbf{f}(\mathbf{x}, \mathbf{y}, \varepsilon), \\ 
    \mathbf{y}' &= \mathbf{g}(\mathbf{x}, \mathbf{y}, \varepsilon),
\end{aligned}
\right.
\end{align}
where $\mathbf{x}\in \mathbb{R}^n$, $\mathbf{y}\in \mathbb{R}^m$, $\mathbf{f}\colon \mathbb{R}^{n+m+1} \to \mathbb{R}^n$, $\mathbf{g}\colon \mathbb{R}^{n+m+1} \to \mathbb{R}^m$, $\mathbf{f}, \mathbf{g} \in C^r$ for some $r$ sufficiently large, and $0<\varepsilon \ll 1$ is a small parameter. The variable $\mathbf{x}$ is called fast variable while the variable $\mathbf{y}$ is called slow variable. System \eqref{eq6} is formulated on the slow time $\tau$, and the $'$ indicates the derivative with respect to $\tau$. By defining the fast time $t=\tau/\varepsilon$, system \eqref{eq6} can be rewritten as
\begin{align} \label{eq7}
\left\{
\begin{aligned}
    \dot{\mathbf{x}} &= \mathbf{f}(\mathbf{x}, \mathbf{y}, \varepsilon), \\ 
    \dot{\mathbf{y}} &= \varepsilon \; \mathbf{g}(\mathbf{x}, \mathbf{y}, \varepsilon),
\end{aligned}
\right.
\end{align}
where the overdot $^\cdot$ now indicates the derivative with respect to $t$. The slow subsystem is defined by considering $\varepsilon=0$ in \eqref{eq6}, which yields 
\begin{align} \label{eq8}
\left\{
\begin{aligned}
    \mathbf{0} &= \mathbf{f}(\mathbf{x}, \mathbf{y}, 0), \\ 
    \mathbf{y}' &= \mathbf{g}(\mathbf{x}, \mathbf{y}, 0).
\end{aligned}
\right.
\end{align}
The slow flow defined by \eqref{eq8} is restricted to the critical manifold 
\begin{equation}
    \mathcal{C}_0 \coloneqq \{(\mathbf{x}, \mathbf{y})\in \mathbb{R}^{n+m} \colon \; \mathbf{f}(\mathbf{x}, \mathbf{y},0)=\mathbf{0}\}, 
\end{equation}
whose points are the equilibria of the fast subsystem 
\begin{align} 
\left\{
\begin{aligned}
    \dot{\mathbf{x}} &= \mathbf{f}(\mathbf{x}, \mathbf{y}, 0), \\ 
    \dot{\mathbf{y}} &= \mathbf{0}.
\end{aligned}
\right.
\end{align}
We provide now two definitions which will be fundamental for the analysis of our model \cite{40}. 

\begin{definition} \label{def1}
    A subset $\mathcal{M}_0 \subset \mathcal{C}_0$ is called \emph{normally hyperbolic} if the $n \times n$ matrix $\textnormal{D}_\mathbf{x}\mathbf{f}(\mathbf{x}, \mathbf{y}, 0)$ of first partial derivatives with respect to the fast variables has no eigenvalues with zero real part for all $(\mathbf{x}, \mathbf{y}) \in \mathcal{M}_0$.
\end{definition}

\begin{definition} \label{def2}
    A normally hyperbolic subset $\mathcal{M}_0 \subset \mathcal{C}_0$ is called \emph{attracting} if all eigenvalues of $\textnormal{D}_\mathbf{x}\mathbf{f}(\mathbf{x}, \mathbf{y}, 0)$ have negative real part for all $(\mathbf{x}, \mathbf{y}) \in \mathcal{M}_0$; similarly, $\mathcal{M}_0$ is called \emph{repelling} if all eigenvalues have positive real part. If $\mathcal{M}_0$ is normally
    hyperbolic and neither attracting nor repelling, it is of \emph{saddle type}.
\end{definition}

A basic result of GSPT is Fenichel's Theorem \cite[Theorem 3.1.4]{40} (see also \cite{23}). 

\begin{theorem}[Fenichel] \label{teo1}
    Consider a compact submanifold (possibly with boundary) $\mathcal{M}_0$ of the critical manifold $\mathcal{C}_0$. If $\mathcal{M}_0$ is normally hyperbolic, then for $\varepsilon>0$ sufficiently small, the following hold:
    \begin{enumerate}
        \item there exists a locally invariant manifold $\mathcal{M}_\varepsilon$, called slow manifold,  diffeomorphic to $\mathcal{M}_0$ (local invariance means that trajectories can enter or leave $\mathcal{M}_\varepsilon$ only through its boundaries); 
        \item $\mathcal{M}_\varepsilon$ is $\mathcal{O}(\varepsilon)$-close to $\mathcal{M}_0$; 
        \item the flow on $\mathcal{M}_\varepsilon$ converges to the slow flow as $\varepsilon \to 0$; 
        \item $\mathcal{M}_\varepsilon$ is $C^r$-smooth; 
        \item $\mathcal{M}_\varepsilon$ is normally hyperbolic and has the same stability properties with respect
        to the fast variables as $\mathcal{M}_0$ (attracting, repelling, or of saddle type); 
        \item $\mathcal{M}_\varepsilon$ is usually not unique but all the possible choices lie $\mathcal{O}(\exp({-D/\varepsilon}))$-close to
        each other, for some $D > 0$; 
        \item the stable and unstable manifolds of $\mathcal{M}_\varepsilon$ are locally invariant and are also $\mathcal{O}(\varepsilon)$-close and diffeomorphic to the stable and unstable manifolds of $\mathcal{M}_0$. 
    \end{enumerate}
\end{theorem}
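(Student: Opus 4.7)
The plan is to establish Fenichel's theorem as a persistence-of-invariant-manifolds result for the fast-slow system viewed as a small perturbation of the layer problem. In the fast formulation \eqref{eq7}, the set $\mathcal{C}_0$ is an $m$-dimensional manifold of equilibria of the unperturbed ($\varepsilon=0$) flow, and on $\mathcal{M}_0$ the tangent bundle splits as $T\mathcal{M}_0 \oplus E^s \oplus E^u$, where the subbundles $E^{s,u}$ are spanned by the generalized eigenspaces of $D_\mathbf{x}\mathbf{f}(\mathbf{x},\mathbf{y},0)$ corresponding to eigenvalues with negative/positive real part. Normal hyperbolicity guarantees a uniform spectral gap between these subbundles and $T\mathcal{M}_0$ (whose associated eigenvalues are all exactly zero), which is precisely the input needed to apply the general invariant manifold theorems.

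First I would introduce Fenichel-type tubular coordinates in a neighborhood of $\mathcal{M}_0$, straightening $\mathcal{M}_0$ to $\{\mathbf{x}=\mathbf{0}\}$ and decomposing the fast direction according to $E^s$ and $E^u$ so that $D_\mathbf{x}\mathbf{f}(\mathbf{0},\mathbf{y},0)$ is block-diagonalized into a contracting and an expanding block. In these coordinates the system becomes
\begin{equation}
\dot{\mathbf{x}} = A(\mathbf{y})\mathbf{x} + \mathcal{O}(|\mathbf{x}|^2) + \varepsilon\,\mathbf{f}_1(\mathbf{x},\mathbf{y},\varepsilon),\qquad \dot{\mathbf{y}} = \varepsilon\,\mathbf{g}(\mathbf{x},\mathbf{y},\varepsilon),
\end{equation}
with $A(\mathbf{y})$ hyperbolic. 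Then I would look for $\mathcal{M}_\varepsilon$ as a graph $\mathbf{x}=h_\varepsilon(\mathbf{y})$ with $h_0\equiv \mathbf{0}$; invariance under the flow is equivalent to the invariance equation $\varepsilon\, Dh_\varepsilon(\mathbf{y})\,\mathbf{g}(h_\varepsilon(\mathbf{y}),\mathbf{y},\varepsilon) = \mathbf{f}(h_\varepsilon(\mathbf{y}),\mathbf{y},\varepsilon)$. I would solve this via a Lyapunov--Perron fixed-point argument in a Banach space of $C^r$ sections over $\mathcal{M}_0$, using the exponential dichotomy of $A(\mathbf{y})$ to invert the linearized operator with bounds uniform in $\varepsilon$. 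The spectral gap yields contractivity in an appropriately weighted norm, giving existence and the $\mathcal{O}(\varepsilon)$-closeness claim in (2), while $C^r$-regularity (claim (4)) follows from the $C^r$-section theorem applied at each level of derivative, using that the spectral gap dominates $r$ times the weakest contraction rate on $\mathcal{M}_0$. Preservation of stability type (claim (5)) is automatic since $A(\mathbf{y})$ depends continuously on $(\mathbf{y},\varepsilon)$ and its eigenvalues move continuously. The convergence of the flow on $\mathcal{M}_\varepsilon$ to the reduced slow flow (claim (3)) follows by substituting $\mathbf{x}=h_\varepsilon(\mathbf{y})=\mathcal{O}(\varepsilon)$ back into $\dot{\mathbf{y}}=\varepsilon\,\mathbf{g}$ and rescaling time. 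The non-uniqueness with exponential separation (claim (6)) comes from standard estimates: two invariant manifolds bounded by the same cone condition must agree up to errors of the stable/unstable escape rate, which is $\mathcal{O}(e^{-D/\varepsilon})$ after passing to slow time. Finally, the stable and unstable foliations (claim (7)) are constructed by an analogous graph transform applied fiberwise, perturbing the $\varepsilon=0$ strong stable/unstable manifolds of each point of $\mathcal{M}_0$.

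The main obstacle is the degeneracy built into the problem: on $\mathcal{M}_0$ the center direction has exactly zero growth rate at $\varepsilon=0$, so there is no spectral gap between the center and the stable/unstable bundles in the usual sense for an autonomous flow. This is precisely why the theorem requires the small parameter $\varepsilon$ to suppress the center dynamics to rate $\mathcal{O}(\varepsilon)$, well below the $\mathcal{O}(1)$ contraction/expansion of $A(\mathbf{y})$; the delicate part of the argument is organizing the weighted norms and the cone fields so that all estimates remain uniform as $\varepsilon\to 0^+$, and tracking enough smoothness to close the $C^r$-section step. Once this is handled, the compactness of $\mathcal{M}_0$ ensures that all constants can be chosen uniformly, so that the conclusions hold for every sufficiently small $\varepsilon$.
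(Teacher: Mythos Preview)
The paper does not prove this theorem. Fenichel's theorem is stated in Section~\ref{section3_1} purely as background, with the proof deferred to the literature (specifically \cite[Theorem 3.1.4]{40} and the original \cite{23}); the authors then \emph{apply} it to their model rather than re-deriving it. So there is no ``paper's own proof'' to compare your proposal against.

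That said, your sketch is a faithful outline of the standard proof strategy one finds in those references: straighten $\mathcal{M}_0$ via tubular/Fenichel coordinates, exploit the exponential dichotomy of the layer linearization $A(\mathbf{y})$, and run a Lyapunov--Perron or graph-transform fixed-point argument in a space of $C^r$ sections, with the $C^r$-section theorem handling regularity and the cone-field estimates giving the exponential non-uniqueness bound. Your identification of the main technical obstacle---that the center direction has zero growth rate at $\varepsilon=0$, so all weighted-norm and cone estimates must be organized to remain uniform as $\varepsilon\to 0^+$---is exactly the point where the proof requires care, and is the reason Fenichel's argument is more delicate than a direct appeal to the general normally-hyperbolic invariant-manifold theorem for flows. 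If you were to flesh this out, the place most likely to need additional work is the $C^r$ step (claim~(4)): the spectral-gap condition needed for $r$ derivatives is nontrivial to verify here because the center rate is $\mathcal{O}(\varepsilon)$ rather than bounded away from zero, and one typically handles this either by an inductive bootstrap on the derivative order or by Fenichel's original differential-forms argument.
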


Note that point 6 of Theorem \ref{teo1} implies that the specific choice of the slow manifold $\mathcal{M}_\varepsilon$ does not change analytical and numerical results.

As we mentioned above, fast–slow systems like \eqref{eq6} and \eqref{eq7} are said to be in standard form. In a more general context, it is possible
to analyze a fast–slow system in \emph{non-standard form} given by \cite{wechselberger2020geometric}
\begin{equation}\label{eq:nonstand}
    \dot{\mathbf{z}} = \mathbf{F}(\mathbf{z},\varepsilon), 
\end{equation}
with $\mathbf{z} \in \mathbb{R}^{n+m}$, $\mathbf{F}\colon \mathbb{R}^{n+m+1} \to \mathbb{R}^{n+m}$, and $\mathbf{F} \in C^r$, where the timescale separation is not explicit nor global. A system in the form \eqref{eq:nonstand} is singularly perturbed if the set
\begin{equation*}
    \mathcal{C}_0:=\{ \mathbf{z}\in  \mathbb{R}^{n+m} \colon \; \mathbf{F}(\mathbf{z},0)=\mathbf{0}\}
\end{equation*}
is non-empty, nor consists of isolated singularities. In particular, system \eqref{eq4} is in such non-standard form. However, sufficiently close to the critical manifold, we will be able to introduce a change of coordinates that brings our system in standard form, in order to apply the results presented in this section. \correz{Note that this means that the separation of variables into fast and slow is only a \emph{local} property close to the critical manifold $\mathcal{C}_0$, rather than a \emph{global} one. We will make this statement more precise in Sections \ref{section3_2} and \ref{section3_3}.}

\begin{figure}[h!]
    \centering 
     \begin{tikzpicture}
 \node at (0,0) {\includegraphics[width=0.3\linewidth]{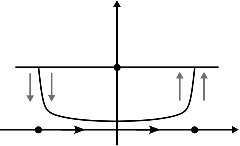}};
\node at (0,1.7) {$x$};
\node at (-1.5,0.3) {$x=x_0$};
\node at (-1.6,-1.5) {$y_0$};
\node at (1.5,-1.5) {$p_{\varepsilon}(y_0)$};
\node at (2.6,-1.2) {$y$};
  \end{tikzpicture}
    \caption{Visualization of the entry–exit map on the line $\{x=x_0\}$. \label{fig_3}}
\end{figure}

Consider now the planar system 
\begin{align} \label{eq9}
\left\{
\begin{aligned}
    \dot{x} &= x \; f(x,y,\varepsilon), \\ 
    \dot{y} &= \varepsilon \; g(x,y,\varepsilon),
\end{aligned}
\right.
\end{align}
with $(x,y) \in \mathbb{R}^2$, $g(0,y,0)>0$, and $\textnormal{sign}(f(0,y,0))=\textnormal{sign}(y)$. Notice that for $\varepsilon=0$ the $y$-axis consists of attracting/repelling equilibria if $y$ is negative/positive, respectively. Consider an orbit starting at $(x_0,y_0)$ for $x_0 \in \mathcal{O}(\varepsilon)$ and $y_0<0$ (see Figure \ref{fig_3}). Intuitively, we expect that it is attracted to the $y$-axis as long as $y<0$ and that it will be then repelled away when $y>0$. Note that, since $g(0,y,0)>0$, we expect the $y$-coordinate of the orbit to grow during this process. However, since the $y$-axis is not normally hyperbolic, Fenichel's Theorem \ref{teo1} can not explain this behavior since it could be applied only to a submanifold $\mathcal{M}_0$ of such axis away from the origin. On the other hand, the entry-exit function \cite{35} gives, in the form of a Poincaré map, an estimate of the behavior of such orbits near the origin. Consider the horizontal line $\{x=x_0\}$, the orbit of Figure \ref{fig_3} re-intersects that line for $y=p_0(y_0) + \mathcal{O}(\varepsilon)$, where $p_0(y_0)$ is defined implicitly as the non-trivial solution to \cite{34} 
\begin{equation} \label{eq10}
    \int_{y_0}^{p_0(y_0)} \frac{f(0,y,0)}{g(0,y,0)}dy=0.
\end{equation}
It is important to remark that \eqref{eq10} provides an approximation of the exit point, indeed it describes the orbits close to the $y$-axis through the flow on the $y$-axis itself (such description is only valid for $\varepsilon$ sufficiently small, see point 3 of Fenichel's Theorem \ref{teo1}). Let $dy=g(0,y,0) \; d\tau$, since the function $g$ describes the growth of the $y$-coordinate, one can transform \eqref{eq10} into an integral equation which provides the (slow) exit time $\tau_E$: 
\begin{equation} \label{eq11}
    \int_0^{\tau_E} f(0,y(\tau),0) \; d\tau = 0. 
\end{equation}
On the $y$-axis, the eigenvalues of the Jacobian of the fast subsystem of \eqref{eq9} are $\lambda_1=f(0,y,0)$ and $\lambda_2=0$. The former is associated to the fast variable $x$, while the latter to the slow variable $y$ (and it is equal to zero since we are considering the fast flow). Therefore, \eqref{eq11} is equivalent to 
\begin{equation} \label{eq12}
    \int_0^{\tau_E} \lambda_1|_{y(\tau)} \; d\tau = 0. 
\end{equation}
Notice that $\lambda_1(y)<0$ if $y<0$, while $\lambda_1(y)>0$ if $y>0$. Indeed, the eigenvalue $\lambda_1(y)$ describes the change of stability of the $y$-axis.  

\subsection{Fast formulation} \label{section3_2}

Setting $\varepsilon=0$ in \eqref{eq4}, we obtain the corresponding fast subsystem as
\begin{align} \label{eq13}
\left\{
\begin{aligned}
    \dot{u} &= -uv, \\ 
    \dot{v} &= -\gamma v (m-u). 
\end{aligned}
\right.
\end{align}
The critical manifold $\mathcal{C}_0$ of \eqref{eq4} is defined as the set of the equilibria of \eqref{eq13}, namely 
\begin{equation}
    \mathcal{C}_0 \coloneqq \{(u,v) \in \mathbb{R}^2 \colon \; v=0\}. 
\end{equation}
In this limit, one may notice a striking similarity of system \eqref{eq13} with the SI subsystem of a classic SIR model with normalized population \cite{jardon2021geometric}. Indeed, the resources $u$ are depleted by the production $v$ with the same underlying mechanism as the ``recruiting'' of susceptible individuals by infected, and thus infectious, ones. In this comparison, $m^{-1}$ plays the role of the Basic Reproduction Number $R_0$: assuming $m^{-1}<u_0\leq 1$, if $m<1$ (i.e., if $R_0>1$), we observe a peak in the production before resources are depleted, otherwise production immediately decreases and tends asymptotically to 0. Refer to Figure \ref{fig:sketch} for a visualization.

In this section, we study the behavior of the solutions of \eqref{eq13}, the fast subsystem of \eqref{eq4}. Denote with $(u_0,v_0)$ the initial conditions and define $u_\infty$ and $v_\infty$ as 
\begin{equation}
    u_\infty=\lim_{t \to +\infty} u(t) \quad \textnormal{and} \quad v_\infty=\lim_{t \to +\infty} v(t), 
\end{equation}
when these limit exist (under the flow of system \eqref{eq13}). 

\begin{proposition}
The trajectories of system \eqref{eq13} converge to $\mathcal{C}_0$ as $t \to \infty$. 
\end{proposition}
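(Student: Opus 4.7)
The plan is to observe that $\mathcal{C}_0 = \{v=0\}$, so the statement amounts to showing $v(t) \to 0$ as $t \to +\infty$ for every trajectory of \eqref{eq13} starting in $\Delta$. On the invariant set $\{v=0\}$ the claim is trivial, while on the invariant axis $\{u=0\}$ the second equation of \eqref{eq13} reduces to $\dot v = -\gamma m v$ and gives $v(t)=v_0 e^{-\gamma m t} \to 0$. It remains to handle $u_0, v_0 > 0$, where the invariance of $\{v=0\}$ together with uniqueness forces $v(t) > 0$ for every $t \geq 0$, so $\dot u(t) = -u(t)v(t) < 0$, $u(t)$ is strictly decreasing and bounded below by $0$, and $u_\infty \coloneqq \lim_{t\to\infty} u(t) \in [0, u_0)$ exists.

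The key tool I would use is a first integral of \eqref{eq13}. Dividing the two equations gives $dv/du = \gamma(m-u)/u$, which I would integrate to produce
\begin{equation}
H(u,v) \coloneqq v + \gamma u - \gamma m \log u,
\end{equation}
conserved along orbits --- the same quantity that produces the curve $v=\alpha(u)$ in \eqref{future}. Because $H$ has a logarithmic singularity at $u=0$, the supposition $u_\infty = 0$ would force $v(t) \to -\infty$ in order to keep $H(u(t),v(t)) \equiv H(u_0,v_0)$, contradicting $v(t) > 0$. Hence $u_\infty > 0$, and passing to the limit in the conservation law shows that $v_\infty \coloneqq \lim_{t\to\infty} v(t)$ exists and is finite and non-negative.

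To close the argument I would argue by contradiction: if $v_\infty > 0$, then for $t$ sufficiently large one has $\dot u(t) \leq -\tfrac{1}{2}u_\infty v_\infty < 0$, which drives $u(t) \to -\infty$ and contradicts $u_\infty > 0$. Therefore $v_\infty = 0$, yielding the desired convergence.

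The step I expect to require the most care is the exclusion of $u_\infty = 0$: one must first verify that $u(t) > 0$ for all finite $t$, which follows from the closed form $u(t) = u_0 \exp\!\left(-\int_0^t v(s)\,ds\right)$ together with the boundedness of $v$ on compact time intervals (this in turn follows from $v(t) = v_0 \exp\!\left(\gamma \int_0^t (u(s)-m)\,ds\right)$ and the forward invariance of $\Delta$, which keeps $u \in [0,1]$). Only then is the identity $H(u(t),v(t)) \equiv H(u_0,v_0)$ legitimately valid along the entire forward orbit and stable under passage to the limit $t \to \infty$.
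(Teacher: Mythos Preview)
Your proof is correct, but it takes a different route from the paper's. The paper does not use the first integral $H$ at this stage. Instead, it observes that the quantity $u + v/\gamma$ is monotone nonincreasing along trajectories, since $\dot u + \dot v/\gamma = -mv \le 0$; combined with the already established convergence of $u$, this gives the existence of $v_\infty \ge 0$. Integrating the same identity over $[0,\infty)$ then yields $\int_0^\infty v(t)\,dt < \infty$, which forces $v_\infty = 0$ directly. This is slightly more elementary: it avoids the logarithmic singularity entirely and does not need the case split on $u_0 = 0$ or the contradiction argument on $u_\infty$. Your approach, on the other hand, extracts more information: you prove $u_\infty > 0$ along the way, which the paper defers to the next proposition (where the constant of motion $\Gamma(u,v) = m\log u - u - v/\gamma$, essentially your $-H/\gamma$, is introduced for that very purpose). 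So your argument effectively merges the paper's two propositions into one, at the cost of a bit more machinery up front.
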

\begin{proof}
Notice that the solutions of the fast system \eqref{eq13} naturally evolve inside $\Delta$ given by \eqref{eq3}. Since $\dot{u} \le 0$, there exists $u_\infty \in [0, u_0]$. Moreover, $\dot{u}+\dot{v}/\gamma \le 0$, therefore there exist also $v_\infty \ge 0$. Integrating $\dot{u}+\dot{v}/\gamma$ we obtain 
    \begin{equation*}
    \begin{split}
        - \infty  < u_\infty + \frac{v_\infty}{\gamma} - u_0 - \frac{v_0}{\gamma} = \int_0^\infty \left(\dot{u}(t)+\frac{\dot{v}(t)}{\gamma}\right) \; dt  = -m \int_0^\infty v(t) \;  dt  < 0, 
    \end{split}
    \end{equation*}
    therefore $v_\infty=0$. 
\end{proof} 

The following proposition provides an implicit expression for $u_\infty$. \correz{The constant of motion $\Gamma(u,v)$ defined in \eqref{eq14} is a classic constant of motion for the SIR system, see e.g. \cite{jardon2021geometric}.}

\begin{proposition}
The quantity 
\begin{equation} \label{eq14}
    \Gamma(u,v)=m\log{u} - u - \frac{v}{\gamma} 
\end{equation}
is a constant of motion for system \eqref{eq13}. Consequently, $u_\infty \in (0, m)$. 
\end{proposition}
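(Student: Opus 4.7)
The plan is to verify the constant of motion by direct differentiation, and then to extract the bounds on $u_\infty$ from the conservation identity together with the monotonicity of $u(t)$ along orbits of \eqref{eq13}.

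First, I would compute $\dot\Gamma$ along solutions. Substituting $\dot u = -uv$ and $\dot v = -\gamma v(m-u)$ into $\dot\Gamma = (m/u)\dot u - \dot u - \dot v/\gamma$, the three terms collapse to $-mv + uv + v(m-u) = 0$. An equivalent derivation, which makes the shape of $\Gamma$ more transparent and parallels the SIR computation referenced just before the statement, is to divide the two equations of \eqref{eq13}, obtaining $dv/du = \gamma(m/u - 1)$, and to integrate.

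For the corollary, assume $u_0>0$ and $v_0>0$ (otherwise the orbit sits on a boundary equilibrium of the fast system, for which there is nothing to prove). Since the previous proposition gives $v_\infty = 0$, evaluating the identity $\Gamma(u(t),v(t)) \equiv \Gamma(u_0,v_0)$ at $t = +\infty$ produces the implicit relation
\[
m\log u_\infty - u_\infty \;=\; m\log u_0 - u_0 - \frac{v_0}{\gamma}.
\]
The right-hand side is finite, while $m\log u \to -\infty$ as $u \to 0^+$; this rules out $u_\infty = 0$ and gives the lower bound $u_\infty > 0$.

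The main obstacle is the upper bound $u_\infty < m$, because the function $h(u) := m\log u - u$ attains its maximum precisely at $u = m$, so the algebraic equation above typically admits two roots, one below and one above $m$, and the conservation law alone cannot distinguish between them. To rule out the upper root I would argue by contradiction: if $u_\infty \ge m$, then $\dot u = -uv \le 0$ forces $u(t) \ge u_\infty \ge m$ for all $t \ge 0$, which in turn makes $\dot v = -\gamma v(m-u) \ge 0$, so $v(t) \ge v_0 > 0$; consequently $\dot u \le -m v_0 < 0$ uniformly in time, contradicting $u \ge 0$ in finite time. This closes the argument and yields $u_\infty \in (0,m)$.
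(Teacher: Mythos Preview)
Your proof is correct. The verification of the constant of motion and the lower bound $u_\infty>0$ match the paper's argument exactly. For the upper bound $u_\infty<m$, however, you take a genuinely different route: the paper argues algebraically, restricting attention to the interval $(0,u_0)$ (implicitly using $\dot u\le 0$), defining $h(x)=m\log(x/u_0)-x+u_0+v_0/\gamma$, and using the unimodality of $h$ (maximum at $x=m$) together with $h(0^+)=-\infty$, $h(u_0)=v_0/\gamma>0$ to locate the unique zero in $(0,m)$. Your contradiction argument instead works directly with the flow: if $u_\infty\ge m$ then $u(t)\ge m$ for all $t$, hence $\dot v\ge 0$, hence $v\ge v_0$, forcing $\dot u\le -mv_0$ and driving $u$ below zero in finite time. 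Your approach is slightly more elementary and makes the dynamical mechanism (the repelling character of $\mathcal{C}_0$ for $u>m$) explicit; the paper's version has the side benefit of establishing uniqueness of the root in $(0,u_0)$, which becomes relevant when the map $\Pi_1$ is defined in the next subsection.
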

\begin{proof}
By direct derivation with respect to time, one can see that $\dot{\Gamma}(u,v) \equiv 0$, therefore 
\begin{equation} \label{eq14_bis}
    m\log{\frac{u_\infty}{u_0}} = u_\infty - u_0 - \frac{v_0}{\gamma}.  
\end{equation}
Define over $(0, u_0)$ the function $h(x)=m\log{(x/u_0)} - x + u_0 + v_0/\gamma$, notice that $\lim_{x \to 0^+} h(x) = -\infty$ while $h(u_0)>0$. Since $\frac{dh}{dx}(x)=m/x - 1$, $h$ increases in $(0, \min {\{m, u_0\}})$ and decreases in $(\min {\{m, u_0\}}, u_0)$, hence there exists a unique zero of $h$, which by definition coincides \correz{with} $u_\infty$, in the open interval $(0,m)$. 
\end{proof}

The fact that $u_\infty < m$ is not surprising. Indeed, the eigenvalue associated to the $v$ equation of \eqref{eq13} is $\lambda=-\gamma (m-u)$. Hence, the critical manifold is attracting for $u<m$ and repelling for $u>m$ (see Definition \ref{def2}). \correz{As we will show shortly in Section \ref{section3_3}, in an suitable neighborhood of $\mathcal{C}_0$, it is possible to introduce a change of coordinates that brings system \eqref{eq2} to standard GSPT form, where the rescaled $v$ is the fast variable and the rescaled $u$ the slow variable. This is in accordance with the fact that the local stability of $\mathcal{C}_0$ depends uniquely on the value of $u$, as we just showed.} 

Consider \eqref{eq14_bis}, and assume that $v_0\in \mathcal{O}(\varepsilon)$; we can thus ignore it, and derive the following relation between $u_\infty$ and $u_0$:
\begin{equation}\label{eq:riscrit}
m\log u_\infty - u_\infty=  m\log u_0 - u_0.
\end{equation}
\correz{Assuming} that $m<1$, for future use, let us define the map 
\begin{equation} \label{map1}
    \Pi_1 \colon \{u \in (m, 1]\}  \to \{u \in (0, m)\}    
\end{equation}
that maps $u_0$ into $u_\infty$ according to \eqref{eq:riscrit}. 

Until now, we have studied the flow of \eqref{eq13}, the fast subsystem of \eqref{eq4}. Before trying to understand the relationship between the orbits of the two systems, we will focus on the slow flow occurring near the critical manifold $\mathcal{C}_0$.

\subsection{Slow formulation} \label{section3_3}

Consider \eqref{eq2} and assume that a solution reached an $\mathcal{O}(\varepsilon^2)$-neighborhood of the critical manifold $\mathcal{C}_0$, namely $v \in \mathcal{O}(\varepsilon^2)$, for $u=u_\infty^\varepsilon$. In this situation, the influence of $\mathbf{F_2}$ \eqref{eq5} becomes very relevant. We rescale $v$ as  $v=\varepsilon x$ and apply a rescaling to the time variable, bringing the system to the slow timescale $\tau=\varepsilon t$: 
\begin{align} \label{eq15}
\left\{
\begin{aligned}
    u' &= u(u-l)(1-u)-ux, \\ 
    \varepsilon x' &= -\gamma x (m-u), 
\end{aligned}
\right.
\end{align}
where the \correz{$'$} indicates the derivative with respect to the slow time $\tau$. Notice that $v \in \mathcal{O}(\varepsilon^2)$ implies $x \in \mathcal{O}(\varepsilon)$. 

If we look at system \eqref{eq15} on the critical manifold $\mathcal{C}_0$, now determined by $x=0$, we obtain 
\begin{equation} \label{eq16}
    u' = u(u-l)(1-u).  
\end{equation}
Hence, on $\mathcal{C}_0$, $u$ decreases towards 0 if $0<u_\infty^\varepsilon < l$, while $u$ grows towards 1 if $u_\infty^\varepsilon>l$. Moreover, point 3 of Fenichel's Theorem \ref{teo1} implies that $u$ behaves like the solution to \eqref{eq16} on the slow timescale and as long as we are observing the orbits away from the non-hyperbolic point of the critical manifold $u=m$. Notice indeed that if $\varepsilon \to 0$ then $x \to 0$ and the evolution of $u$ \eqref{eq15} converges to the evolution described by Eq. \eqref{eq16}. This is also in line with point 2 of Fenichel's Theorem \ref{teo1} which tells us that the slow flow occurs if $x$ is $\mathcal{O}(\varepsilon)$-close to $\mathcal{C}_0$. 

Consider the particular case $l<m<1$: this is the only scenario where the entry-exit phenomenon described in Section \ref{section3_1} could happen. Assume that an orbit of the perturbed system \eqref{eq2} enters a $\mathcal{O}(\varepsilon^2)$-neighborhood of $\mathcal{C}_0$ for some $u_\infty^\varepsilon \in (l,m)$ (recall Eq. \eqref{eq14_bis}). \correz{Defining} $x=v/\varepsilon$ and $y=u-m$, system \eqref{eq2} can be written as
\begin{align} \label{eq18}
\left\{
\begin{aligned}
    \dot{y} &= \varepsilon \left( (y+m)(y+m-l)(1-y-m)-(y+m)x \right) \eqqcolon \varepsilon g(x,y,\varepsilon), \\ 
    \dot{x} &= x \gamma y \eqqcolon x f(x,y,\varepsilon), \\ 
\end{aligned}
\right.
\end{align}
which has the exact same structure of system \eqref{eq9}. In particular, $g(0, y, 0) > 0$ and $\textnormal{sign}(f (0, y, 0)) = \textnormal{sign}(y)$ for $y\in (l-m,1-m)$, i.e., $u\in (l,1)$. The entry-exit function can therefore be used to compute $\mathcal{O}(\varepsilon)$-approximations of the exit point $u_E^\varepsilon \in (m,1)$ and of the (slow) exit time $\tau_E$. Let $y_\infty^\varepsilon=u_\infty^\varepsilon-m$, then $y_E^\varepsilon=u_E^\varepsilon-m=p_0(y_\infty^\varepsilon)$ is the non-trivial solution to (see Eq. \eqref{eq10})
\begin{equation} \label{eq19}
    \int_{y_\infty^\varepsilon}^{p_0(y_\infty^\varepsilon)} \frac{\gamma y}{(y+m)(y+m-l)(1-y-m)} dy = 0,  
\end{equation}
which can be rewritten as 
\begin{equation} \label{eq20}
    l (1-m) \log \left( \frac{1-m-y_E^\varepsilon}{1-m-y_\infty^\varepsilon} \right) + m (l-1) \log \left( \frac{m+y_E^\varepsilon}{m+y_\infty^\varepsilon} \right) + (m-l) \log \left( \frac{y_E^\varepsilon+m-l}{y_\infty^\varepsilon+m-l} \right) = 0. 
\end{equation}
Applying the inverse change of variables $u=y+m$, we obtain
\begin{equation} \label{eq20bis}
    l (1-m) \log \left( \frac{1-u_E^\varepsilon}{1-u_\infty^\varepsilon} \right) + m (l-1) \log \left( \frac{u_E^\varepsilon}{u_\infty^\varepsilon} \right) + (m-l) \log \left( \frac{u_E^\varepsilon-l}{u_\infty^\varepsilon-l} \right) = 0. 
\end{equation} 
Intuitively, if $u_\infty^\varepsilon \to l^+$, necessarily $u_E^\varepsilon\to 1^-$, to compensate a term that diverges to $-\infty$; this happens regardless of the relation between the parameters $l<m$. This entry-exit relation, for an entry in the slow flow close to $l^+$, holds true whether the perturbed system \eqref{eq2} exhibits a stable limit cycle, a heteroclinic connection between $u=1$ and $u=l$, the stable equilibrium point $\mathbf{x_4}$, or neither (see Figures \ref{fig:sketch3}, \ref{fig:sketch4}, \ref{fig:sketch5}, and \ref{fig:sketch2}, respectively). In particular, this explains why and how, when $m\to \tilde{m}^+$, the stable limit cycle exhibited by system \eqref{eq2} ``collapses'' on the union of two heteroclinic orbits between $\mathbf{x_3}=(1,0)$ and $\mathbf{x_2}=(l,0)$ (in the fast flow, along the level \correz{curve} $\Gamma(u,v)\equiv -1$ of the constant of motion \eqref{eq14}) and between $\mathbf{x_2}=(l,0)$ and $\mathbf{x_3}=(1,0)$ (in the slow flow, evolving in time according to \eqref{eq16}). Moreover, Eq. \eqref{eq11} ensures that the (slow) exit time $\tau_E$ is the non-trivial solution to 
\begin{equation} \label{eq21}
    0 = \int_0^{\tau_E} f(0,y(\tau),0) \; d\tau = \int_0^{\tau_E} \gamma \; y(\tau) \; d\tau = \int_0^{\tau_E} -\gamma \left( m - u(\tau) \right) \; d\tau, 
\end{equation}
where the argument of the last integral is indeed equal to the eigenvalue associated to the fast variable $v$, which describes the stability of the critical manifold $\mathcal{C}_0$ (see Eq. \eqref{eq12} and Section \ref{section3_2}). Since on $\mathcal{C}_0$ the variable $u$ grows towards $1$ if $u_\infty>l$ (see Eq. \eqref{eq16}), \eqref{eq21} admits a unique non-trivial solution, hence the same is true also for \eqref{eq19}. Notice that, since $u_\infty^\varepsilon \to l^+$ implies $u_E^\varepsilon \to 1^-$, if $u_\infty^\varepsilon \to l^+$ then $\tau_E \to \infty$, indeed the orbit takes infinite time to travel the heteroclinic trajectory from $\mathbf{x_2}$ to $\mathbf{x_3}$. Again for future use (recall \eqref{map1}), for $l<m<1$ and in the limit $\varepsilon \to 0$ we define the map 
\begin{equation} \label{map2}
    \Pi_2 \colon \{ u \in (l, m)\} \to \{ u \in (m,1) \}
\end{equation}
that maps $u_\infty$ into $u_E$ according to \eqref{eq20bis}.

\correz{We remark that the change of coordinates $y=u-m$, that brings the non-hyperbolic point to the origin, is not strictly necessary, as shown e.g. in \cite{hsu2017bifurcation}; however, here we opted for bringing our system in the most common entry-exit setup, for ease of interpretation.}

In the next section we will focus on the interactions between the two different timescales in order to characterize the complete behavior of the orbits. 

\subsection{Unified formulation} \label{section3_4}

Standard perturbation theory \cite[Corollary 3.1.7]{2} implies that an orbit of the perturbed system \eqref{eq2}, away from the critical manifold $\mathcal{C}_0$, follows $\mathcal{O}(\varepsilon)$-closely the orbit of the fast system \eqref{eq13}, related to the same initial conditions, for $\mathcal{O}(1)$ times $t$. Even if under the flow of \eqref{eq13} $v$ converges to zero, it is not obvious that in the perturbed flow $v$ enters a $\mathcal{O}(\varepsilon^2)$-neighborhood of $\mathcal{C}_0$. Indeed, $v$ decreases at most exponentially, therefore it requires a time $t$ of order at least $|\log \varepsilon | \gg 1$ to enter such neighborhood. Hence, in order to understand the behavior of the perturbed orbits for larger times, several situations must be distinguished, according to the bifurcation analysis of our system (see Section \ref{section2_2}). We exploit techniques from Geometric Singular Perturbation Theory to describe in details the evolution of any orbit towards a stable equilibrium or limit cycle. 

\subsubsection{$m<l$} \label{section3_4_1}

The Poincaré–Bendixson Theorem implies that any orbit starting from the point $(u_0, v_0)$, $v_0>0$ converges towards the equilibrium $\mathbf{x_1}$. Hence, the orbits necessarily enter a $\mathcal{O}(\varepsilon^2)$-neighborhood of $\mathcal{C}_0$, starting the slow flow. Anyway, several situations must be distinguished, depending on the initial conditions (recall point 3 of Fenichel's Theorem \ref{teo1} and Eq. \eqref{eq16}). All these possible scenarios are represented in Figure \ref{fig:sketch1}. 

Assume that $(u_0, v_0)$ is $\mathcal{O}(1)$-away from $\mathcal{C}_0$; the orbit will follow the fast subsystem until it enters the slow flow for  some $u<m<l$ (recall that the critical manifold is attracting only for $u<m$). At this moment, $u$ decreases towards 0, hence the orbit converges to $\mathbf{x_1}$. 

Assume that $v_0 \in \mathcal{O}(\varepsilon^2)$, i.e., the orbits begins inside the slow flow. If $u_0<m$ then the orbits converges to $\mathbf{x_1}$ immediately. If $u_0 \in (m,l)$, then, as $u$ decreases, the orbit is repelled away from $\mathcal{C}_0$ as long as $u>m$, while $v$ starts to decrease when $u=m$. Hence ,the orbit could initially escape from the slow flow, but surely later it re-enters it. If $u_0>l$, then in the slow flow $u$ increases towards 1 and, at the same time, the orbit is repelled away from $\mathcal{C}_0$, hence it enters the fast flow. We then fall back in the first scenario. 

\subsubsection{$l<m<1$} \label{section3_4_3} 

This is the most complex and interesting case. It is indeed the only scenario where the entry-exit phenomenon described in Section \ref{section3_1} could happen (see also Section \ref{section3_3}). Moreover, the orbits could not enter the slow flow since they could be attracted to $\mathbf{x_4}$, which is not $\mathcal{O}(\varepsilon^2)$-close to $\mathcal{C}_0$, or by a stable limit cycle, which could also not be $\mathcal{O}(\varepsilon^2)$-close to $\mathcal{C}_0$ since it collapses on $\mathbf{x_4}$ as $m \to {\frac{l+1}{2}}^-$. Consider an orbit starting from the point $(u_0, v_0)$ $\mathcal{O}(1)$-away from the critical manifold. First, we want to understand when it converges to $\mathbf{x_1}$ and when to another attractor of the system. In order to distinguish between these two different possibilities, we rely on the quantity $\Gamma$, recall \eqref{eq14} and \eqref{eq14_bis}, as proven in the following proposition.  

\begin{proposition} \label{prop_4}
    Consider an orbit of system \eqref{eq13} starting from $(u_0, v_0)$ $\mathcal{O}(1)$-away from the critical manifold. Assuming that $m>l$, then $u_\infty \le l$ if and only if $v_0 \ge \alpha(u_0)$ \eqref{future}. In particular, if $m<(l-1)/\log l<1$, then 
    \begin{align} \label{eq22} 
        \begin{cases}
            \alpha(u_0) < 0 \quad & \textnormal{if} \; u_0 < l \; \textnormal{or} \; u_0>\Tilde{u}, \\ 
            \alpha(u_0) = 0 & \textnormal{if} \; u_0=l  \; \textnormal{or} \; u_0=\Tilde{u}, \\ 
            \alpha(u_0) > 0 & \textnormal{if} \; l < u_0 < \Tilde{u}, \\ 
        \end{cases} 
    \end{align}
    where $\Tilde{u}$ is the unique zero of $\alpha(u)$ in the interval $(m,1)$, otherwise if $m>(l-1)/\log l$ then 
    \begin{align} \label{eq17_bis} 
        \begin{cases}
            \alpha(u_0) < 0 \quad & \textnormal{if} \; u_0 < l, \\ 
            \alpha(u_0) = 0 & \textnormal{if} \; u_0=l, \\ 
            \alpha(u_0) > 0 & \textnormal{if} \; u_0>l. \\ 
        \end{cases} 
    \end{align}
\end{proposition}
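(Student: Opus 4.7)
The plan is to derive the first assertion from the constant of motion $\Gamma$ of \eqref{eq14}. Since $v_\infty = 0$, the identity $\Gamma(u_\infty,0) = \Gamma(u_0,v_0)$ reads $H(u_\infty) = H(u_0) - v_0/\gamma$, where I write $H(u) \coloneqq m\log u - u$. Next I would note that $H'(u) = m/u - 1 > 0$ on $(0,m)$, so $H$ is strictly increasing there. By the preceding proposition $u_\infty \in (0, m)$, and the standing hypothesis $l < m$ places $l$ in the same interval, so the monotonicity of $H$ gives the chain of equivalences
\begin{equation*}
u_\infty \le l \iff H(u_\infty) \le H(l) \iff m\log u_0 - u_0 - \frac{v_0}{\gamma} \le m\log l - l \iff v_0 \ge \gamma\bigl(l - u_0 + m\log(u_0/l)\bigr) = \alpha(u_0),
\end{equation*}
which proves the ``if and only if''.

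For the sign analysis of $\alpha$, I would compute $\alpha(l) = 0$ and $\alpha'(u) = \gamma(m-u)/u$, so $\alpha$ strictly increases on $(0,m)$, attains its global maximum at $u = m$, and strictly decreases on $(m,+\infty)$. Together with $\lim_{u\to 0^+}\alpha(u) = -\infty$, this immediately yields $\alpha(u) < 0$ on $(0,l)$ and $\alpha(u) > 0$ on $(l, m]$, so the only remaining question is whether $\alpha$ vanishes again on $(m, 1]$. That is controlled by the sign of $\alpha(1) = \gamma(l - 1 - m\log l)$: recalling that $\log l < 0$ for $l \in (0,1)$, the inequality $l - 1 - m\log l < 0$ is equivalent to $m < (l-1)/\log l$. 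In that regime the Intermediate Value Theorem produces a zero $\tilde{u} \in (m,1)$, unique by strict monotonicity of $\alpha$ on $(m,1)$, giving the trichotomy \eqref{eq22}; in the opposite regime $\alpha(1) \ge 0$ and $\alpha$ stays nonnegative on $[l,1]$, giving \eqref{eq17_bis}.

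No real obstacle is expected: the proof is essentially a one-shot application of the constant of motion together with elementary monotonicity of $H$ and $\alpha$. The only subtlety worth flagging explicitly is the direction of the inequality when dividing by $\log l$, which is the source of the particular form of the threshold $(l-1)/\log l$ (rather than $(1-l)/\log l$).
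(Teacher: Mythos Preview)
Your proposal is correct and follows essentially the same approach as the paper: both derive $\alpha$ from the constant of motion $\Gamma$ (equivalently, from \eqref{eq14_bis} with $u_\infty=l$), then analyze the sign of $\alpha$ via $\alpha'(u)=\gamma(m-u)/u$ and the sign of $\alpha(1)$. Your argument is in fact slightly more explicit than the paper's on the ``if and only if'' part, supplying the monotonicity of $H(u)=m\log u - u$ on $(0,m)$ to justify the equivalence $u_\infty\le l \iff v_0\ge\alpha(u_0)$, which the paper leaves implicit.
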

\begin{proof}
    The curve $v_0 = \alpha(u_0)$ is obtained by setting $u_\infty = l$ in \eqref{eq14_bis}. Note that $\alpha(l)=0$. Since
    \begin{equation*}
        \frac{d \alpha(u)}{d u} = - \gamma + \gamma \frac{m}{u}, 
    \end{equation*}
    $\alpha$ is an increasing function of $u$ in the interval $(0,m)$, while it is decreasing if $u>m$. If $m>1$ the thesis is obvious, in particular $\alpha$ is always a strictly increasing function of $u$. Assuming that $m<1$, it suffices to show that there exists a zero of $\alpha$ in the interval $(m,1)$ if and only if $m<(l-1)/\log l$. This follows directly from the fact that $\alpha(1)=\gamma(l-1-m\log l)$, which is negative if and only if $m<(l-1)/\log l$. Finally, notice that $l<(l-1)/\log l<1$, which is in line with the assumption $m \in (l,1)$. 
\end{proof}

Note that the curve $v=\alpha(u)$ \eqref{future} provides an approximation of curve that separates the basins of attraction of $\mathbf{x_1}$ with the rest of the set $\Delta$ \eqref{eq3}, indeed the perturbed system is influenced also by $\varepsilon \mathbf{F_2}$ \eqref{eq4}. From now on, we assume that $\varepsilon$ is sufficiently small to justify the approximations used throughout the remainder of this section. If $v_0 > \alpha(u_0)$ the orbit should enter the slow flow for $u=u_\infty^\varepsilon<l$ and hence converge towards $\mathbf{x_1}$ ($u_\infty^\varepsilon$ is slightly different than $u_\infty$ \eqref{map1}, in Remark \ref{rmk1} we will provided more details about this approximation, which obviously becomes better as $\varepsilon$ decreases). On the other hand, if $v_0 < \alpha(u_0)$, several cases must be distinguished, depending on the value of $m$. As we mentioned, the slow flow does not necessarily begin. 
\begin{enumerate}
\item If $m=(l-1)/\log l$, then, in the limit $\varepsilon \to 0$, an heteroclinic orbit from $\mathbf{x_3}$ to $\mathbf{x_2}$ exists, and it is described by $v=\alpha(u)$. Indeed, in this case $\alpha(1)=0$ and, by definition, $\alpha(l)=0$. This means that $(l-1)/\log l$ represents an approximation, valid for $\varepsilon$ sufficiently small, of the value $\tilde{m}$ for which an heteroclinic cycle if formed. In this situation, any orbit starting from the point $(u_0, v_0)$, with $v_0 < \alpha(u_0)$, is attracted to such cycle. This situation is represented in Figure \ref{fig:sketch4}. Interestingly, $\tilde{m}$ loses its dependency on $\gamma$ as $\varepsilon \to 0$. 
\item If $l<m<(l-1)/\log l$, then the only attractor is $\mathbf{x_1}$, indeed $\mathbf{x_4}$ exists but is unstable and the cycle around it does not exist. Therefore any orbit starting from the point $(u_0, v_0)$, with $v_0 < \alpha(u_0)$, sooner or later will escape from the set $\{0<v<\alpha(u)\}$. As we will explain in details later, this happens when $u_E=\Pi_2(u_\infty)>\Tilde{u}$ \eqref{map2}. Indeed, we will prove that, in this case, the slow flow necessarily begins (recall that \eqref{eq22} holds). To be precise, this happens when $u_E^\varepsilon=\Pi_2(u_\infty^\varepsilon)>\Tilde{u}^\varepsilon$, where $\tilde{u}^\varepsilon$ represents the zero in the interval $(m,1)$ of the curve that separates the basin of attraction of $\mathbf{x_1}$ with the rest of the set $\Delta$ \eqref{eq3}. However, as mentioned above, we will assume that $\varepsilon$ is sufficiently small to be able to make such approximations (indeed, the values $u_\infty^\varepsilon$, $u_E^\varepsilon$, and $\tilde{u}^\varepsilon$ are unknown). This situation is represented in Figure \ref{fig:sketch2}. 
\item If $(l-1)/\log l < m < (l+1)/2$, then any orbit starting from the point $(u_0, v_0)$, with $v_0 < \alpha(u_0)$, is attracted to the stable limit cycle around the unstable equilibrium point $\mathbf{x_4}$. Note that \eqref{eq17_bis} holds, in contraposition to what happens in the case $l<m<(l-1)/\log l$. This situation is represented in Figure \ref{fig:sketch3}. 
\item If $(l+1)/2 \le m < 1$, then any orbit starting from the point $(u_0, v_0)$, with $v_0 < \alpha(u_0)$, is attracted to the stable equilibrium point $\mathbf{x_4}$. Again, recall that \eqref{eq17_bis} holds. This situation is represented in Figure \ref{fig:sketch5}. 
\end{enumerate}

Until now, given the initial point $(u_0, v_0)$, we have determined where an orbit converges \emph{asymptotically}. Now, we want to focus on the \emph{transient} behavior that it exhibits during this process. We will assume that $v_0 < \alpha(u_0)$ since the other case has already been fully described. Define 
\begin{equation} \label{isocline}
    \beta(u)\coloneqq\varepsilon (u-l)(1-u), 
\end{equation}
the $u$-isocline. Notice that, since $\varepsilon \ll 1$, then $\beta(u) < \alpha(u)$ for all $u \in (l,m]$. The following technical lemma will be useful to understand when the \correz{orbit enters the slow flow}. 

\begin{lemma} \label{lemma1}
    Consider \eqref{eq2} and assume that $l <m <1$. Consider an initial condition $(u_*, v_*)$ such that $l<u_*<m-K_1$, $0<K_1 \in \mathcal{O}(1)$, and $v_* < \beta(u_*)$ (in particular, $v_* < \alpha(u_*)$). Let $0<K_2<K_1$ such that $K_2, K_1-K_2 \in \mathcal{O}(1)$ and denote with $(u^*, v^*)$ the point where the corresponding trajectory intersects the line $\{u=m-K_2\}$. Then, for sufficiently small $\varepsilon$, we have that $v^* \in \mathcal{O}(\varepsilon^2)$. 
\end{lemma}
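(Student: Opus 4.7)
The plan is to exploit the fact that throughout the interval on which $u(t)\in[u_*,m-K_2]$, the eigenvalue $-\gamma(m-u)$ governing the exponential decay of $v$ is bounded above by $-\gamma K_2<0$, while the $u$-velocity is only $\mathcal{O}(\varepsilon)$. Over the long fast time required for $u$ to traverse this $\mathcal{O}(1)$ interval, $v$ thus decays exponentially fast, dropping from $\mathcal{O}(\varepsilon)$ to a quantity much smaller than $\varepsilon^2$.

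First I would prove that $u(t)$ is strictly increasing on $[0,T]$, where $T$ denotes the first time at which $u(T)=m-K_2$. The hypothesis $v_*<\beta(u_*)$ gives $\dot u(0)=u_*(\beta(u_*)-v_*)>0$. Arguing by contradiction, if $t_1\in(0,T]$ were the first time with $\dot u(t_1)=0$, then necessarily $v(t_1)=\beta(u(t_1))$, and differentiating $\dot u=u(\beta(u)-v)$ together with $\dot v(t_1)=-\gamma v(t_1)(m-u(t_1))<0$ (as $u(t_1)<m$) yields $\ddot u(t_1)=u(t_1)\gamma v(t_1)(m-u(t_1))>0$. This contradicts $\dot u$ approaching $0$ from positive values at $t_1$. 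Hence $\dot u>0$ on $[0,T]$, and in particular $m-u(t)\geq K_2$ uniformly on this interval.

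Next I would derive two quantitative estimates. Since $v\geq 0$, we have $\dot u\leq \varepsilon u(u-l)(1-u)\leq \varepsilon M$, where $M:=\max_{u\in[0,1]}u(u-l)(1-u)$ is an explicit constant depending only on $l$; integrating on $[0,T]$ gives the lower bound $T\geq (K_1-K_2)/(\varepsilon M)$. On the other hand, the uniform bound $m-u(t)\geq K_2$ combined with the linearity of the $v$-equation in $v$ yields, via Gr\"onwall's inequality, $v(t)\leq v_*\exp(-\gamma K_2 t)$ on $[0,T]$.

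Combining the two estimates,
\[
v^*=v(T)\;\leq\; v_*\exp\!\left(-\frac{\gamma K_2(K_1-K_2)}{\varepsilon M}\right),
\]
and since $v_*<\beta(u_*)\in\mathcal{O}(\varepsilon)$, the right-hand side is of the form $\mathcal{O}(\varepsilon)\cdot\exp(-C/\varepsilon)$ for an explicit constant $C>0$, which is in fact $o(\varepsilon^N)$ for every $N\in\mathbb{N}$; in particular $v^*\in\mathcal{O}(\varepsilon^2)$ for $\varepsilon$ sufficiently small. The main technical obstacle is the monotonicity argument for $u$: it is the step that both supplies the uniform lower bound $m-u(t)\geq K_2$ used in the Gr\"onwall estimate and ensures that $T$ is finite, since once $v$ has dropped well below $\beta(u)$ the trajectory is well approximated by the reduced slow flow $\dot u\approx \varepsilon u(u-l)(1-u)>0$ on $\mathcal{C}_0$, which drives $u$ monotonically across $m-K_2$.
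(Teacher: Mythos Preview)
Your proof is correct and follows essentially the same strategy as the paper: bound $\dot v\le -\gamma K_2 v$ while $u\le m-K_2$, bound $\dot u$ by $\mathcal{O}(\varepsilon)$ to force $T\gtrsim 1/\varepsilon$, and conclude that $v^*$ is exponentially small in $\varepsilon$. Your second-derivative contradiction argument for the monotonicity of $u$ (hence $v<\beta(u)$ throughout) is in fact more careful than the paper's version, which simply asserts that $u$ increases because $v<\beta(u)$ without closing that loop.
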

\begin{proof}
    As long as $u \in [u_*, m-K_2)$, $\dot{v} \le -\gamma K_2 v$. Hence, if $v$ becomes $\mathcal{O}(\varepsilon^2)$-small in a time shorter than the one needed by $u$ to attain the value $u^*=m-K_2$, the thesis follows. Indeed, during this process $v$ decreases because $u<m$, while $u$ increases because $v<\beta(u)$. Since $\dot{u} \le \varepsilon u$, Gr\"onwall's Lemma implies that $u$ takes a time $t$ of order at least equal to $1/\varepsilon$ to reach the value $u^*$. A function behaving like $v$, meaning a function that decreases exponentially, needs a time $t$ or order $|\log \varepsilon|$ to become $\mathcal{O}(\varepsilon^2)$-small. But $|\log \varepsilon| \ll 1/\varepsilon$, hence this happens before $u$ reaches the value $u^*$. 
\end{proof}

\begin{remark} \label{rmk1}
    Consider an orbit that exits from the slow flow for $u=u_E^\varepsilon$ and $v=v_E^\varepsilon < \alpha(u_E^\varepsilon)$. The value $u_\infty$ given by $\Pi_1$ \eqref{map1} (applied to $u_0=u_E^\varepsilon$) provides (at worst) an $\mathcal{O}(\varepsilon |\log \varepsilon|)$-approximation of both $u_*<m$, such that $v_*=\beta(u_*)$, and the actual value of $u$ for which the orbit re-enters the slow flow $u_\infty^\varepsilon$ (assuming that the hypothesis of Lemma \ref{lemma1} are satisfied, otherwise the slow flow might not begin). Indeed, firstly standard perturbation theory implies that the orbit of the fast subsystem \eqref{eq13} is $\mathcal{O}(\varepsilon)$-close to the one of the perturbed system \eqref{eq2} when it reaches the point $(u_*,v_*)$ since this happens in the fast flow, i.e., in $\mathcal{O}(1)$-times $t$. Moreover, under the flow of \eqref{eq13}, $u$ will not suffer large variations anymore since $u_*<m$ and $v_* \in \mathcal{O}(\varepsilon)$. Secondly, as observed in the proof of the lemma, an orbit takes a time $t$ of order $|\log \varepsilon|$ to travel from $(u_*, v_*)$ to an $\mathcal{O}(\varepsilon^2)$-neighborhood of $\mathcal{C}_0$. However, $u_* \le u(t) \le u_* \exp(\varepsilon t) = u_* + \mathcal{O}(\varepsilon |\log \varepsilon|)$. 
\end{remark} 

Consider an orbit of \eqref{eq2} starting from $(u_0,v_0)$ below the curve $v=\alpha(u)$ and not $\mathcal{O}(\varepsilon^2)$-close to $\mathcal{C}_0$. It will reach the isocline curve $v=\beta(u)$ \eqref{isocline} for some $u_*<m$ since it initially follows the orbit of \eqref{eq13}. Several situations must be distinguished depending on the value of $m$ (for clarity, they are presented in an order that progresses from the simplest to the most complex, and the enumeration matches the one previously discussed). 
\begin{enumerate}
\setcounter{enumi}{1}
\item If $l<m<\tilde{m}(l,\gamma,\varepsilon)\approx(l-1)/\log l$ and $u_*$ is $\mathcal{O}(1)$-away from $m$, then Lemma \ref{lemma1} and Remark \ref{rmk1} imply that the orbit enters the slow flow for some $u=u_\infty^\varepsilon \approx u_*<m$. Denote with $u_E^\varepsilon$ the value attained by $u$ when the orbit exits from the slow flow. If $u_E^\varepsilon \approx u_E=\Pi_2(u_\infty)>\Tilde{u}$, this means that the orbit exits from the slow flow above the curve $v=\alpha(u)$, hence it will directly converge towards $\mathbf{x_1}$. If instead $u_E^\varepsilon \approx u_E=\Pi_2(u_\infty^\varepsilon)<\Tilde{u}$, then this process will happen again. Indeed, the under the fast flow the orbit will reach again the curve $v=\beta(u)$ for some $u_*<m$ (see Remark \ref{rmk1}). In particular, since, sooner of later, the orbit must converge towards $\mathbf{x_1}$, we must have $\Pi_1(u_E)=\Pi_1(\Pi_2(u_\infty))<u_\infty$. This means that this process repeats $n$ times, after which we will have 
\begin{equation*}
    (\Pi_2 \circ \overbrace{\Pi_1 \circ \Pi_2 \circ \dots \circ \Pi_1 \circ \Pi_2}^{n-\textnormal{times}}) (u_\infty) > \tilde{u},
\end{equation*}
and the orbit starts to converge towards $\mathbf{x_1}$. The composition of maps $\Pi_2 \circ \Pi_1$ is therefore increasing while the composition of maps $\Pi_1 \circ \Pi_2$ is decreasing. This situation is represented in Figure \ref{fig:sketch2}. Obviously, if initially $u_*$ is $\mathcal{O}(\varepsilon)$-close to $m$, then the orbit will be repelled away from $\mathbf{x_4}=(\Bar{u}, \Bar{v})$ since it is unstable (notice that $\beta(m)=\beta(\Bar{u})=\Bar{v}$) and, sooner or later, we will fall back in the previous scenario. 
\setcounter{enumi}{0}
\item If $m=\tilde{m}(l,\gamma,\varepsilon)\approx(l-1)/\log l$, then the entry-exit phenomenon happens an infinite number of times. Since the orbit is attracted to the heteroclinic cycle, the map $\Pi_2 \circ \Pi_1$ is increasing while the map $\Pi_1 \circ \Pi_2$ is decreasing. This situation is represented in Figure \ref{fig:sketch4}. 
\setcounter{enumi}{3}
\item If $(l+1)/2 \le m < 1$ and $u_*$ is $\mathcal{O}(1)$-away from $m$ the entry-exit phenomenon happens $n$ times, after which we will have 
\begin{equation}
    (\overbrace{\Pi_1 \circ \Pi_2 \circ \dots \circ \Pi_1 \circ \Pi_2}^{n-\textnormal{times}}) (u_\infty) - m \in \mathcal{O}(\varepsilon). 
\end{equation}
At this point, since $\mathbf{x_4}$ is locally asymptotically stable, the orbit will be attracted to it (to be precise, this happens when the perturbed orbit reaches the $u$-isocline $\mathcal{O}(\varepsilon)$-close to $m$). In this situation, the map $\Pi_2 \circ \Pi_1$ is decreasing while the map $\Pi_1 \circ \Pi_2$ is increasing. This situation is represented in Figure \ref{fig:sketch5}. Obviously, if $u_*$ is $\mathcal{O}(\varepsilon)$-close to $m$ then the orbit will be immediately attracted to it. 
\setcounter{enumi}{2}
\item If $\tilde{m}(l,\gamma,\varepsilon)\approx(l-1)/\log l < m < (l+1)/2$ two situations must be distinguished. Indeed, since as $m \to {\frac{l+1}{2}}^-$ the stable limit cycle collapses on $\mathbf{x_4}$ while as $m \to {\frac{l-1}{\log l}}^+$ it stretches towards the heteroclinic cycle, there exists a value $\Bar{m}(l,\gamma,\varepsilon) \in ((l-1)/\log l, (l+1)/2)$ such that the cycle enters a $\mathcal{O}(\varepsilon^2)$-neighborhood of $\mathcal{C}_0$ if and only if $m<\Bar{m}$. Assume that $m>\Bar{m}$, and consider an orbit starting from the point $(u_*, v_*)$, $v_*=\beta(u_*)$ inside the stable limit cycle. Since it can not enter a $\mathcal{O}(\varepsilon^2)$-neighborhood of $\mathcal{C}_0$, necessarily $u_*$ is $\mathcal{O}(\varepsilon)$-close to $m$, otherwise Lemma \ref{lemma1} would imply that the orbit enters such neighborhood. This implies that if $m>\Bar{m}$, i.e., the limit cycle does not enter the slow flow, then the minimum and the maximum values attained by $u$ on such cycle are $\mathcal{O}(\varepsilon)$-close to $m$. Since the cycle is stable and the considered orbit is $\mathcal{O}(\varepsilon)$-close to it, the orbit will be attracted to the cycle. On the other hand, if such orbit starts outside the stable limit cycle, if $u_*$ is $\mathcal{O}(1)$-far from $m$, the the orbit will enter the slow flow and a process similar to the one the case $(l+1)/2 \le m < 1$ happens. The map $\Pi_2 \circ \Pi_1$ is therefore decreasing while the map $\Pi_1 \circ \Pi_2$ is increasing. Once $u_*$ is $\mathcal{O}(\varepsilon)$-close to $m$ it is also $\mathcal{O}(\varepsilon)$-close to the stable limit cycle and it will be attracted to it, hence the orbit does not enter the slow flow anymore. Assume now that $m<\Bar{m}$ (see Figure \ref{fig:sketch3}). Since the stable limit cycle enters the slow flow, the entry-exit process happens an infinite number of times. If an orbit starts outside such cycle the map $\Pi_2 \circ \Pi_1$ is decreasing while the map $\Pi_1 \circ \Pi_2$ is increasing. On the other hand, if an orbit starts inside such cycle the map $\Pi_2 \circ \Pi_1$ is increasing while the map $\Pi_1 \circ \Pi_2$ is decreasing (note that, in this case, if $u_*$ is too close to $m$ then the orbit will be initially repelled away from $\mathbf{x_4}$, recall that it is a focus, and the entry-exit phenomenon could not immediately start). 
\end{enumerate}

\subsubsection{$m>1$} \label{section3_4_2} 

In this case, the orbits can converge towards $\mathbf{x_1}$ or $\mathbf{x_3}$. In both cases, they enter a $\mathcal{O}(\varepsilon^2)$-neighborhood of $\mathcal{C}_0$, starting the slow flow. Since the critical manifold is attracting everywhere, if an orbit enters such neighborhood for $u<l$ then it will converge towards $\mathbf{x_1}$, otherwise towards $\mathbf{x_3}$ (see Eq. \eqref{eq16}). Again, the curve $v=\alpha(u)$ distinguishes between these two possibilities (see Proposition \ref{prop_4}). These two possibilities are represented in Figure \ref{fig:sketch6}.  

\subsubsection{Concluding remarks on the limit $\varepsilon \to 0$} \label{section3_4_4}

We conclude the section with a discussion regarding the reasoning behind the limit $\varepsilon \to 0$ that we took several times. Away from the critical manifold $\mathcal{C}_0$, taking the limit $\varepsilon \to 0$ means neglecting the (small) influence of the term $\varepsilon u (u-l)(1-u)$ when computing the (fast) time derivative of $u$, hence the flow can be approximated by system \eqref{eq13} and the map $\Pi_1$ \eqref{map1} can be derived if $m<1$. Close to $\mathcal{C}_0$ this approximation can not be made, since $v \in \mathcal{O}(\varepsilon^2)$. In this situation, the original system can be written as \eqref{eq15}. Taking the limit $\varepsilon \to 0$ means neglecting the influence of $v$ when computing the (slow) time derivative of $u$, hence the flow can be approximated by Eq. \eqref{eq16}. Indeed, during the slow flow, if $\varepsilon \to 0$ then $v \to 0$. 
Moreover, in the particular case $l<m<1$ the map $\Pi_2$ \eqref{map2} can be defined. In conclusion, approximating the fast flow with \eqref{eq13} and the slow flow with \eqref{eq16}, we obtain an approximation of the whole flow of \eqref{eq2}, which is exact in the limit $\varepsilon \to 0$.

\subsection{A singular Hopf bifurcation} \label{sec:interm_scale}

We showed that system \eqref{eq2} evolves on two different timescales, depending on the position of the orbits with respect to the critical manifold $\mathcal{C}_0$. However, the Hopf bifurcation described in Section \ref{section2_2} creates a sort of third ``intermediate'' timescale. Indeed, Hopf's Theorem states that the initial (i.e., for $m \to {\frac{l+1}{2}}^-$) amplitude of the stable limit cycle around $\mathbf{x_4}$ is zero but its initial period $T_t$ (in the fast timescale $t$) is equal to $2 \pi / |\textnormal{Im}(\lambda_{1,2})|$, where $\lambda_{1,2}$ are given by \eqref{eqlambda}. Hence, $T_t \in \mathcal{O}(1/\sqrt{\varepsilon})$, meaning that it is larger than the $\mathcal{O}(1)$-times $t$ describing the fast flow, while it is smaller than the $\mathcal{O}(1/\varepsilon)$-times $t$ describing the slow flow. The stable cycle can evolve on this intermediate timescale only if $\Bar{m} < m < (l+1)/2$, indeed if $(l-1)/\log l < m<\Bar{m}$ then the period of the cycle is of the order $\mathcal{O}(1/\varepsilon)$ since it enters the slow flow. Precisely, a singular Hopf bifurcation occurs, as described in \cite[Section 8.2]{40}, since 
\begin{align} 
\left\{
\begin{aligned}
    \lim_{\varepsilon \to 0} \textnormal{Im}(\lambda_{1,2}) &= 0 \quad \textnormal{on the fast timescale } t, \\ 
    \lim_{\varepsilon \to 0} |\textnormal{Im}(\lambda_{1,2})| &= \infty \quad \textnormal{on the slow timescale } \tau, 
\end{aligned}
\right.
\end{align}
i.e., the eigenvalues $\lambda_{1,2}$ become singular as $\varepsilon \to 0$. Indeed, on the slow timescale $\tau$, the initial period $T_\tau$ is of the order $\mathcal{O}(\sqrt{\varepsilon})$.

\section{Numerical simulations} \label{section4}

The underlying assumption behind the reasoning made in Section \ref{section3} is that $\varepsilon$ should be  ``sufficiently small''. In particular, the maps $\Pi_1$ \eqref{map1} and $\Pi_2$ \eqref{map2} are defined in the limit $\varepsilon \to 0$. In this section, through numerical simulations, we will give various visualizations of our analysis. 

Firstly, we will verify how well the curve $v=\alpha(u)$ \eqref{future} separates the basin of attraction of $\mathbf{x_1}$ with the rest of the set $\Delta$ \eqref{eq3} for different values of $\varepsilon$. 

Secondly, for a fixed value of $\varepsilon$, we will carry out a numerical bifurcation analysis of our model. In particular, we will analyze how the cycle expands as $m$ decreases, leading to the formation of an heteroclinic cycle between $\mathbf{x_2}$ and $\mathbf{x_3}$. 

Lastly, assuming that the stable limit cycle exists and that it enters the slow flow (i.e., $(l-1)/ \log l < m < \Bar{m}$), we will find the fixed point of the map $\Pi_1 \circ \Pi_2$. This value represents the value of $u$, in the limit $\varepsilon \to 0$, for which the stable limit cycle enters the slow flow. Note that we still do not know the value of $\Bar{m}$, which therefore has to be derived numerically as well. Moreover, we will verify how well the fixed point $u_\infty$ of $\Pi_1 \circ \Pi_2$ approximates the value $u_\infty^\varepsilon$ for which the stable limit cycle enters the slow flow for different values of $m$ and $\varepsilon$. Note that, looking at a numerical simulation, it is not possible to exactly tell when it enters the slow flow, indeed we only know that we must have $v \in \mathcal{O}(\varepsilon^2)$. However, as explained in Remark \ref{rmk1}, $u_\infty^\varepsilon$ can be approximated with $u_*$, which we will be able to measure. 

Since it is possible that $v$ becomes exponentially small (i.e., of the order of $\exp({-K/\varepsilon})$, for some $0<K\in \mathcal{O}(1)$) during the slow flow, it could be impossible to precisely numerically simulate system \eqref{eq2}. For this reason, we introduce the variable $w=\log v$ (see e.g. \cite[Appendix A.5]{della2024geometric}) in order to re-write that system as 
\begin{align} \label{eq23}
\left\{
\begin{aligned}
    \dot{u} &= \varepsilon u (u-l) (1-u)  - u \exp(w), \\ 
    \dot{w} &= -\gamma (m-u). \\ 
\end{aligned}
\right.
\end{align}
System \eqref{eq23} is remarkably less stiff than system \eqref{eq2}, as we shall see in the following.

\subsection{Approximating the basins of attraction: the curve $v=\alpha(u)$} \label{section4_1}

In order to understand how well the curve $v=\alpha(u)$ \eqref{future} separates the basin of attraction of $\mathbf{x_1}$ with the rest of the set $\Delta$ \eqref{eq3}, we fixed the values of the parameters $l=0.4$, $m=0.67$, $\gamma=1$ and we simulated the evolution of the orbits for hundreds of different initial conditions and for two different values of $\varepsilon$ (namely, $\varepsilon=0.02$ and $\varepsilon=0.01$). Figure \ref{fig:test_alpha} shows the results of this test. For such values of the parameters, there exists a stable cycle between the $u$-axis and the curve $v=\alpha(u)$, and the only other possible attractor is $\mathbf{x_1}$. Although the cycle attracts few orbits starting above the curve $v=\alpha(u)$, notice that this curve approximates well the border between the basin of attraction of $\mathbf{x_1}$ and the basin of attraction of the stable cycle. In particular, such orbits start from initial conditions which are not $\mathcal{O}(\varepsilon)$-far from $v=\alpha(u)$, indeed they are closer to this curve. Finally, notice that the approximation sharpens as $\varepsilon$ decreases. 

\begin{figure}[h!]
    \centering
\begin{subfigure}{.45\textwidth}
  \centering   
\begin{tikzpicture}
 \node at (0,0) {\includegraphics[width=\linewidth]{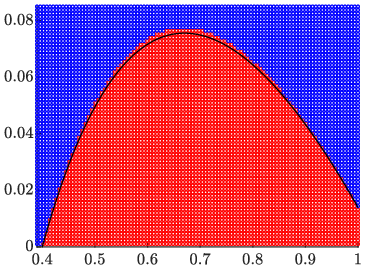}};
\node at (4,-2.5) {$u$};
\node at (-3.2,3) {$v$};
  \end{tikzpicture}
  \caption{}
\end{subfigure}\hspace{0.5cm}
\begin{subfigure}{.45\textwidth}
  \centering
\begin{tikzpicture}
 \node at (0,0) {\includegraphics[width=\linewidth]{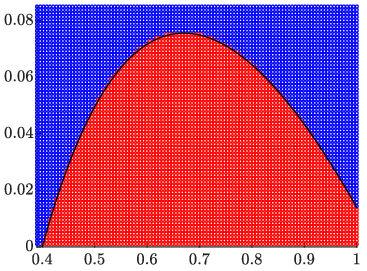}};
\node at (4,-2.5) {$u$};
\node at (-3.2,3) {$v$};
  \end{tikzpicture}
  \caption{}
\end{subfigure}
    \caption{Blue dots represent initial conditions whose corresponding orbits tend to $\mathbf{x_1}$ as $t\to +\infty$, red dots represent initial conditions whose corresponding orbits tend to the stable limit cycle contained between the $u$-axis and the curve $v=\alpha(u)$ (black curve). Values of the parameters: $l=0.4$, $m=0.67$, $\gamma=1$, and (a) $\varepsilon=0.02$ or (b) $\varepsilon=0.01$. Notice that, as $\varepsilon$ decreases, the curve $\alpha$ approximates better the border between the corresponding basins of attraction. \label{fig:test_alpha}} 
\end{figure}

\subsection{Numerical bifurcation analysis: the (heteroclinic) cycle} \label{section4_2} 

In order to numerically showcase the creation, evolution, and collapse of the locally stable limit cycle of system \eqref{eq2}, we performed a numerical bifurcation analysis through the use of MATCONT \cite{dhooge2003matcont}. Due to the remarkable stiffness of system \eqref{eq2} even for not too small values of $\varepsilon$ (here, $\varepsilon=0.02$), we actually performed our bifurcation analysis on system \eqref{eq23}, which is considerably easier to study numerically. Indeed, \correz{for system \eqref{eq2}, the values of $v$ can be shown to be exponentially small in $\varepsilon$} (i.e.\correz{,} of the order of $\exp(-K/\varepsilon)$, for some $0<K\in \mathcal{O}(1)$), as an orbit travels through the slow flow close to $v=0$. This can be observed in Figure \ref{fig:bifurc} by noticing that $w$, from system \eqref{eq23}, reaches values smaller than $-400$, which corresponds to $v \le \exp(-400)$. The limit cycle is represented in purple, consistently with Figure \ref{fig:sketch3}. For the bifurcation analysis, we chose, consistently with Section \ref{section4_1} and Figure \ref{fig:test_alpha}, $l=0.4$, $\gamma=1$, and $\varepsilon=0.02$. Our \correz{analytical results} (see Section \ref{section3_4_3}) predicted that, as $\varepsilon\to 0$, the stable limit cycle collapses on the heteroclinic cycle for 
\begin{equation*}
    m=\dfrac{l-1}{\log l} \approx 0.6548, 
\end{equation*}
and MATCONT detects a \emph{cycle limit point} (LPC) at $m\approx 0.6556$, in good agreement with our prediction, considering the value of $\varepsilon=0.02$. Close to such value of $m$, the minimum and maximum values of $u$ of the locally stable limit cycle approach $0.4=l$ and $1$, respectively, corresponding to two equilibria connected by an heteroclinic orbit on the set $v=0$; this confirms that the limit cycle approaches the heteroclinic cycle we described above.

\begin{figure}[h!]
    \centering
\begin{subfigure}{.45\textwidth}
  \centering   
\begin{tikzpicture}
 \node at (0,0) {\includegraphics[width=\linewidth]{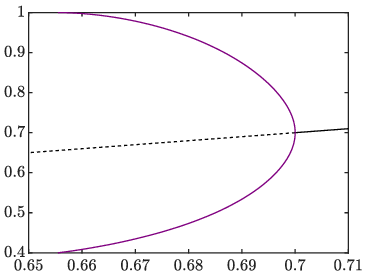}};
\node at (4,-2.5) {$m$};
\node at (-3.2,3) {$u$};
  \end{tikzpicture}
  \caption{}
\end{subfigure}\hspace{0.5cm}%
\begin{subfigure}{.46\textwidth}
  \centering
\begin{tikzpicture}
 \node at (0,0) {\includegraphics[width=\linewidth]{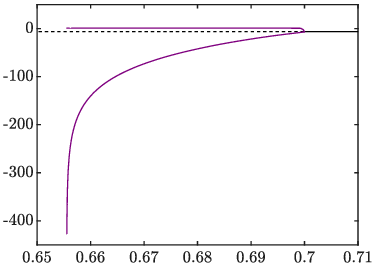}};
\node at (4,-2.5) {$m$};
\node at (-3.2,3) {$w$};
  \end{tikzpicture}
  \caption{}
\end{subfigure}
    \caption{Plot of the maximum and minimum limit cycle values of $u$ and $w$ of system \eqref{eq23} as $m$ varies. Values of the parameters: $l=0.4$, $\gamma=1$, and $\varepsilon=0.02$. For $m\in [0.7,0.71]$ the limit cycle does not exist, and the equilibrium $(\Bar{u}, \log \Bar{v})$, which corresponds to $\mathbf{x_4}$, is locally asymptotically stable (solid black line). For $m\in (\tilde{m}(l,\gamma,\varepsilon),0.7)$ the system exhibits a locally stable limit cycle around such equilibrium point (now unstable; dashed black line), which collapses on a heteroclinic cycle between $\mathbf{x_2}$ and $\mathbf{x_3}$ for $m\approx 0.6556$.\label{fig:bifurc}} 
\end{figure}

\subsection{The Poincaré map $\Pi_1 \circ \Pi_2$} \label{section4_3}

Assume that $\Tilde{m} < m < \Bar{m}$; in this case, the stable limit cycle around $\mathbf{x_4}$ exists and part of it is $\mathcal{O}(\varepsilon^2)$-close to $\mathcal{C}_0$. This means that the orbit that evolves on such cycle enters the slow flow for $u=u_\infty^\varepsilon \in (l,m)$ and exits from it for $u=u_E^\varepsilon \in (m,1)$. In the limit $\varepsilon \to 0$, $u_\infty$ is the fixed point of the map $\Pi_1 \circ \Pi_2$ while $u_E=\Pi_2(u_\infty)$. Figure \ref{fig_6_A} shows such fixed point assuming $l=0.4$, $\gamma=1$, and varying $m$ between $(l-1)/\log l$ and $(l+1)/2$. Since the fixed point exists for all these values of $m$, this implies that $\Bar{m} \to {\frac{l+1}{2}}^-$ as $\varepsilon \to 0$. Notice that $u_\infty \to l^+$ as $m \to {\frac{l-1}{\log l}}^+$, which is in line with the fact that for such value of $m$ we observe the heteroclinic orbit between $\mathbf{x_2}$ and $\mathbf{x_3}$. \correz{We summarize in Figure \ref{fig:bifurc_nuova} the possible behaviors of system \eqref{eq2}, showing that the system corresponding to a small fixed $\varepsilon$ is not topologically equivalent to the one obtained in the limit $\varepsilon \to 0$ because $\Bar{m} \to {\frac{l+1}{2}}^-$ as $\varepsilon \to 0$.}

We would like to compare the values of $u_\infty$ presented in Figure \ref{fig_6_A}, which are derived in the limit $\varepsilon \to 0$, with the values attained by $u$ as the orbit that describes the stable limit cycle enters the slow flow, for some $0<\varepsilon\ll 1$. Denote such value as $u_\infty^\varepsilon$, even if we are not able to derive it precisely since it is not possible to exactly tell in which point an orbit enters the slow flow, in Remark \ref{rmk1} we showed that an (at worst) $\mathcal{O}(\varepsilon |\log \varepsilon|)$-approximation of it is given by $u_*$ such that the point $(u_*, v_*)$ satisfies $u_*<m$, $v_*=\beta(u_*)$ \eqref{isocline}. Note that $u_*$ corresponds to the minimum value attained by $u$ throughout the stable limit cycle. Hence, it suffices to choose as initial conditions a point inside the stable limit cycle, simulate the evolution of the orbit for long times, and finally take as approximation of $u_\infty^\varepsilon$ the minimum value attained by $u$. Figure \ref{fig_6_B} reports the results of this test for different values of $m$ and as $\varepsilon$ decreases (we set $l=0.4$ and $\gamma=1$). As expected, the difference between $u_\infty$ and $u_\infty^\varepsilon$ decreases as $\varepsilon$ decreases. For values of $m$ away from $(l-1)/\log l\approx 0.6548$, the behaviors of the curves are similar. On the other hand, for values of $m$ closer to this threshold quantity, the behaviors are slightly different. In particular, for $m=0.66$ the curve is not monotone. This is a consequence of these values of $m$ being not sufficiently far from $(l-1)/\log l$ and hence some unpredictable behavior arise if $\varepsilon$ is too large. Indeed, the cases $m=0.6575$ and $m=0.66$ are the only ones where for some values of $\varepsilon$ we obtained $u_*<u_\infty$. Recall that our whole analysis hinges on the requirement that $\varepsilon$ is small enough, hence for $\varepsilon$ larger than a case-dependent threshold, we may not be able to properly predict the behavior of the system.

\begin{figure}[h!]
     \centering
\begin{subfigure}{.45\textwidth}
  \centering   
\begin{tikzpicture}
 \node at (0,0) {\includegraphics[width=\linewidth]{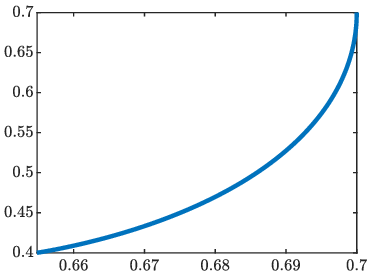} };
\node at (4,-2.5) {$m$};
\node at (-3.2,3) {$u_\infty$};
  \end{tikzpicture}
  \caption{} \label{fig_6_A}
\end{subfigure}\hspace{0.75cm}%
\begin{subfigure}{.45\textwidth}
  \centering
\begin{tikzpicture}
 \node at (0,0) {\includegraphics[width=\linewidth]{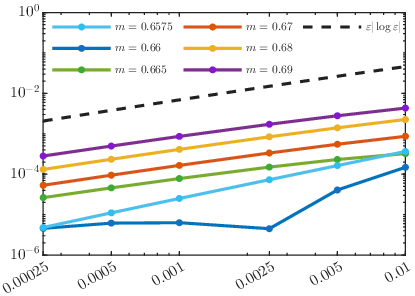}};
\node at (3.9,-2.15) {$\varepsilon$};
  \end{tikzpicture}
  \caption{} \label{fig_6_B}
\end{subfigure}
    \caption{Values of the parameters: $l=0.4$ and $\gamma=1$. (a) Fixed point $u_\infty$ of the map $\Pi_1 \circ \Pi_2$ in the limit $\varepsilon \to 0$ as $m$ varies between $(l-1)/\log l \approx 0.6548$ and $(l+1)/2=0.7$. (b) $|u_*-u_\infty|$, where $u_*$ is derived according to Section \ref{section4_3}, for different values of $m$ and as $\varepsilon$ decreases. Recall that $u_*$ is at worst an $\mathcal{O}(\varepsilon |\log \varepsilon|)$-approximation (dashed black line) of $u_\infty^\varepsilon$.\label{fig_6}} 
\end{figure}

\begin{figure}[h!]
\centering
\begin{subfigure}{.45\textwidth}
  \centering
  \begin{tikzpicture}
 \node at (0,0) {\includegraphics[width=.9\linewidth]{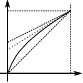}};
 \node at (0,4) {Fixed $\varepsilon \ll 1$};
 \node at (-1,3) {Case 6};
\node at (0.1,-1.6) {Case 1};
\node[rotate=45] at (-2.1,-0.7) {Case 4};
\node[rotate=30] at (-1.75,0.2) {Case 4 bis};
\node[rotate=45] at (-0.6,0) {Case 2};
\node at (3,-3) {$l$};
\node at (-3.05,3.25) {$m$};
\node at (-3,2.5) {$1$};
\node at (-3,0) {$\dfrac{1}{2}$};
\node at (2.4,-2.9) {$1$};
\node at (-1.5,2) {Case 5};
  \end{tikzpicture}
  \caption{}
  \label{fig:sketch1_nuova}
\end{subfigure}\hfill
\begin{subfigure}{.45\textwidth}
  \centering
 \begin{tikzpicture}
 \node at (0,0) {\includegraphics[width=.9\linewidth]{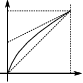}};
 \node at (-1,3) {Case 6};
 \node at (0,4) {Singular limit $\varepsilon \to 0$};
\node at (0.1,-1.6) {Case 1};
\node[rotate=45] at (-1.8,-0.1) {Case 4};
\node[rotate=45] at (-0.6,0) {Case 2};
\node at (3,-3) {$l$};
\node at (-3.05,3.25) {$m$};
\node at (-3,2.5) {$1$};
\node at (-3,0) {$\dfrac{1}{2}$};
\node at (2.4,-2.9) {$1$};
\node at (-1.5,2) {Case 5};
  \end{tikzpicture}
  \caption{}
  \label{fig:sketch2_nuova}
\end{subfigure}
\caption{\correz{Bifurcation diagram for $l$ and $m$ assuming $\gamma$ fixed and (a) $\varepsilon \ll 1$, (b) $\varepsilon \to 0$. The six Cases refer to the corresponding situations depicted in Figure \ref{fig:sketch}. Note that Case 3 (Figure \ref{fig:sketch4}) corresponds to the solid black curve between Cases 2 and 4. Scenario (a) is not topologically equivalent to scenario (b). Indeed, when $\varepsilon \ll 1$, Case 4 can be divided into two subcases: if $m<\Bar{m}$, then the cycle enters the slow flow (like in Figure \ref{fig:sketch3}; Case 4), while if $m>\Bar{m}$ it does not (Case 4 bis). On the other hand, if $\varepsilon \to 0$ then, $\Bar{m} \to (l+1)/2$ and the cycle always enters the slow flow (Case 4). Moreover, recall that $\tilde{m} \to (l-1)/\log l$ as $\varepsilon \to 0$, hence $m=(l-1)/\log l$ is the equation of the solid black curve representing Case 3 in (b).} \label{fig:bifurc_nuova}}\end{figure}

\section{Interpretation of the analytical results} \label{section5} 

We conclude the analysis of the multiple-timescale Bazykin-Berezovskaya model \eqref{eq1} with a discussion regarding the real-world interpretation of the results that we obtained. In particular, we want to focus on under what conditions the attractors of the orbits change and on how to maximize the production $P$ over large times. We will initially give an interpretation of the results in an economic setting. Finally, we will conclude with some remarks concerning the application of the model to a predator-prey scenario (recall that the original Bazykin-Berezovskaya model \cite{4,bazykin1979allee} was introduced to describe this type of biological dynamics). 

\subsection{Economic scenario} \label{section5_1}

As we showed in the previous sections, the nature of the attractors of system \eqref{eq2} is entirely determined by the relationship between the parameters $l$ and $m$. The parameter $l$ represents a threshold that distinguishes between regimes in which resources naturally grow and those in which they naturally decline. In contrast, the parameter $m$ captures the balance between the tendency of production to decrease, represented by the coefficient $d$, and its maximum possible growth, given by the maximum possible quantity of resources $M$ and the efficiency of their utilization $b$. In the context of an economy, the goal is to maximize long-term production while ensuring that resources do not get depleted. Since the parameters $e$, $b$, and $d$ in system \eqref{eq1} are the only ones that society can potentially control, we focus our attention on their influence. In terms of model \eqref{eq2}, the most favorable scenarios occur when trajectories are attracted either to the equilibrium point $\mathbf{x_4}$ or to the stable limit cycle surrounding it. Both outcomes require that $m > l$, though not excessively so. 

When $m < l$, which corresponds in the original system \eqref{eq1} to the condition $d < bL$, the decrease in production $P$ is smaller than the potential increase it would receive in the worst-case scenario where resources do not decline ($R = L$). This scenario leads to an initial rise in production as long as $R > d/b$, followed by a collapse of both $R$ and $P$ towards zero. In other words, overexploitation of resources leads to short-term gains followed by long-term collapse, as the environment fails to regenerate the consumed resources. 

If instead $m > 1$, then the decay rate of $P$ exceeds its maximum potential growth, even under optimal resource conditions ($R = M$), which inevitably leads to $P \to 0$. Two outcomes are then possible: if, at the moment production collapses, the remaining resources are sufficiently high ($R > L$), then $R$ will grow towards $M$; otherwise, $R$ will also decay to zero. These two possibilities are approximately separated by the curve $v = \alpha(u)$ (see Eq. \eqref{future}), which also depends on $\gamma$. An increase in $\gamma$ favors recovery of resources, since higher $\gamma$ results in smaller negative values of $\dot{v}$ (and hence $\dot{P}$), allowing $\dot{u}$ (and hence $\dot{R}$) to increase. 

If $m \in [\tilde{m}(l,\gamma,\varepsilon), 1)$, then the system can reach a balance (represented either by a stable point or by a limit cycle) between natural resource growth and consumption by production. Again, the curve $v = \alpha(u)$ approximately separates this balanced regime from one in which both $R$ and $P$ vanish. Interestingly, if $m \in (l, \tilde{m}(l,\gamma,\varepsilon))$, then the system still collapses ($R, P \to 0$), highlighting that the condition $d > bL$ is not sufficient by itself to guarantee sustainability. Finally, note that the small parameter $\varepsilon$ does not alter the qualitative dynamics discussed above, indeed it merely introduces a second timescale to the system. However, as we will see, the equilibrium values of $R$ and $P$ do depend on $\varepsilon$ quantitatively.

Suppose that a trajectory is attracted either to the equilibrium point $\mathbf{x_4}$ or to the stable limit cycle surrounding it. In this case, we are interested in determining the average values $R_a$ and $P_a$ assumed by the resources $R$ and the production $P$ over long time periods. In particular, our focus is on the average production $P_a$, as it reflects the sustained output resulting from our resource consumption strategy. If an orbit converges towards $\mathbf{x_4}$, since $u_a=m$ and $v_a=\varepsilon (m-l)(1-m)$, then  
\begin{equation}
    R_a = \frac{d}{b}, \quad P_a = \varepsilon \frac{n M}{e} \left(\frac{d}{b}-L\right)\left(M-\frac{d}{b}\right) \in \mathcal{O}(\varepsilon). 
\end{equation}
Assume that an orbit is attracted to the stable limit cycle, we would like to compute $R_a$ and $P_a$ for the orbit describing the cycle. Denote with $T$ the period of such orbit, then 
\begin{equation*}
    0 = \frac{1}{T} \int_0^T \frac{\dot{v}(t)}{v(t)} dt = - \frac{\gamma}{T} \int_0^T (m-u(t)) \; dt = -\gamma (m - u_a), 
\end{equation*}
implying that $R_a=d/b$, like in the previous case. An explicit value of $P_a$ can not be obtained, however we can make the following observations. If the cycle does not interact with the slow flow (namely if $m \in (\Bar{m}(l,\gamma,\varepsilon), (l+1)/2)$), then $P_a \in \mathcal{O}(\varepsilon)$. If the cycle interacts with the slow flow, then the flow on it can divided into two parts: the part evolving of the slow flow for a time $T_1 \in \mathcal{O}(1/\varepsilon)$ and the part evolving outside the slow flow for a time $T_2 \in \mathcal{O}(1)$. During the slow flow, $v$ attains an average value $v_{a,1}$ of the order $\mathcal{O}(\varepsilon^2)$, while outside such flow an average value $v_{a,2}$ of order $\mathcal{O}(1)$. Hence, 
\begin{equation*}
    v_a = \frac{v_{a,1} T_1 + v_{a,2} T_2}{T_1 + T_2} \in \mathcal{O}(\varepsilon), 
\end{equation*}
which implies that $P_a \in \mathcal{O}(\varepsilon)$. 

In conclusion, there is no substantial difference, in terms of average production, between the scenario in which the orbits are attracted to $\mathbf{x_4}$ and the one in which they are attracted to a stable limit cycle around it. However, from a practical point of view, the former is more desirable, as it avoids the risk of overexploiting resources and subsequently converging towards $\mathbf{x_1}$. The latter scenario can be interpreted as one in which production must be significantly reduced for a prolonged period to allow resources to regenerate sufficiently. In this case, long phases of near-zero production alternate with short bursts of intense activity, during which resources are rapidly exploited, leading to high production levels ($P \in \mathcal{O}(1)$). This behavior is especially evident as $m \to \tilde{m}^+$, when the limit cycle approaches the heteroclinic cycle. Conversely, when an orbit converges towards $\mathbf{x_4}$ or to a stable limit cycle that does not interact with the slow flow, production remains consistently low but stable, avoiding large oscillations.

\subsection{Predator-prey scenario} \label{section5_2}

In general, the observations previously made for the economic scenario remain valid in this framework as well (although the parameters of model \eqref{eq1} would naturally take on different interpretations). However, two disadvantages arise in this context. Model \eqref{eq2} accounts for a carrying capacity for the prey population but not for the predators, which could be considered inconsistent from a modeling standpoint. Notably, this issue does not arise in the production-resource framework, since production represents an abstract quantity that can reasonably be assumed to be potentially unbounded. For the same reason, it is acceptable for production to reach extremely low (but nonzero) values, whereas the number of predators must remain biologically plausible. This distinction is particularly relevant given that, during the slow flow, we have $v \in \mathcal{O}(\varepsilon^2)$, and the geometric analysis was performed in the limit $\varepsilon \to 0$. 

\section{Conclusions} \label{section6}

In this paper, we introduced a fast-slow version of the Bazykin–Berezovskaya predator-prey model with Allee effect and we applied it within an economic framework. Assuming that the natural growth rate of resources is significantly lower than the other parameters governing the system, we fully characterized its asymptotic behavior. We distinguished between several economic scenarios, each describing a different evolution of resources and production, and we identified those in which long-term production is maximized while preventing resource depletion. Our analysis relies on techniques from Geometric Singular Perturbation Theory (GSPT), specifically exploiting Fenichel’s Theorem and the entry-exit function. Due to the slow regeneration of resources, the orbits generally interact with a production-free manifold in different ways. The most interesting case captures a slow increase in resources following a brief period of exploitation. We showed that this cycle can repeat infinitely, if the orbit is attracted to a stable limit cycle arising from a (singular) Hopf bifurcation, or finitely, after which the system settles at the stable equilibrium $\mathbf{x_4}$ representing a balance between resource regeneration and production. Notably, we were able to analytically distinguish between scenarios where orbits are attracted to a production-free equilibrium and those (described above) where sustained cycles of production is possible. This distinction had not been achieved before in the original Bazykin–Berezovskaya model (i.e., without the small parameter $0 < \varepsilon \ll 1$). 

We performed several numerical simulations to validate the approximations made during the analytical analysis. In particular, we compared the results obtained in the limit $\varepsilon \to 0$ with those derived for small but finite values of $\varepsilon$. Remarkably, we observed a good agreement even for values of $\varepsilon$ on the order of $10^{-2}$, which is relatively large given the nature of the phenomena being modeled. Due to the stiffness of the original system, we introduced a change of variables bringing it into an equivalent version to enable accurate numerical simulations. Interestingly, these simulations revealed that the system with a small fixed $\varepsilon$ is not topologically equivalent to the one obtained in the limit $\varepsilon \to 0$. Specifically, as $\varepsilon \to 0$, the threshold value $\Bar{m}(l,\gamma,\varepsilon)$, which determines whether the stable limit cycle around $\mathbf{x_4}$ interacts with the slow flow, collapses to $(l+1)/2$, which corresponds to the occurrence of the singular Hopf bifurcation. 

As outlook for future research, one might consider a kinetic derivation of the model studied in the present paper, along the lines of what was done e.g. in \cite{MR4865531}. In particular, it would be interesting to obtain a Fokker-Planck approximation of the model (see e.g. \cite{kinetic}) and compare the related equilibria with the ones studied in the present work. Moreover, in the context of perturbed ODE models, potential generalizations of the model studied in this manuscript could include either multiple resources, multiple production variable, or both. Resources might have different growth rate, possibly letting the model evolve on three timescales. Such an approach would allow us to include both slowly regenerating resources and more sustainable resources (either with a much faster growth, or renewable resources such as solar energy). 

\bigskip
\bigskip
\textbf{Acknowledgments.} The authors are members and acknowledge the support of {\it Gruppo Nazionale di Fisica Matematica} (GNFM) of {\it Istituto Nazionale di Alta Matematica} (INdAM). JB acknowledges the support of the project PRIN 2022 PNRR ``Mathematical Modelling for a Sustainable Circular Economy in Ecosystems'' (project code P2022PSMT7, CUP D53D23018960001) funded by the European Union - NextGenerationEU, PNRR-M4C2-I 1.1, and by MUR-Italian Ministry of Universities and Research.

{\footnotesize
	\bibliographystyle{unsrt}
	\bibliography{Bibliography}
}

\end{document}